\documentclass[english]{smfart}
\usepackage{sabbah_grenoble10}

\begin{document}
\frontmatter
\title{Non-commutative Hodge structures}

\author[C.~Sabbah]{Claude Sabbah}
\address{UMR 7640 du CNRS\\
Centre de Mathématiques Laurent Schwartz\\
École polytechnique\\
F--91128 Palaiseau cedex\\
France}
\email{sabbah@math.polytechnique.fr}
\urladdr{http://www.math.polytechnique.fr/~sabbah}

\thanks{This research was supported by the grant ANR-08-BLAN-0317-01 of the Agence nationale de la recherche.}

\begin{abstract}
This article gives a survey of recent results on a generalization of the notion of a Hodge structure. The main example is related to the Fourier-Laplace transform of a variation of polarizable Hodge structure on the punctured affine line, like the Gauss-Manin systems of a proper or tame algebraic function on a smooth quasi-projective variety. Variations of non-commutative Hodge structures often occur on the tangent bundle of Frobenius manifolds, giving rise to a tt* geometry.
\end{abstract}

\subjclass{14D07, 34M40}

\keywords{Non-commutative Hodge structure, Fourier-Laplace transformation, Brieskorn lattice}

\alttitle{Structures de Hodge non commutatives}
\begin{altabstract}
Nous donnons un panorama des résultats récents concernant une généralisation de la notion de structure de Hodge. L'exemple principal est celui produit par la transformation de Fourier-Laplace d'une variation de structure de Hodge polarisable sur la droite affine épointée, comme les systèmes de Gauss-Manin de fonctions algébriques propres ou modérées sur une variété quasi-projective lisse complexe. Le fibré tangent d'une variété de Frobenius peut souvent être muni d'une variation de structures de Hodge non-commutatives polarisables, d'où l'on déduit une géométrie spéciale du type tt*.
\end{altabstract}

\maketitle
\mainmatter%

\section{Introduction}

\begin{notation}\label{notation}
All along this article, the abbreviation ``nc'' means ``non-commutative''. We will consider the Riemann sphere $\PP^1$ equipped with two affine charts $U_0=\Spec\CC[\hb]$, $U_\infty=\Spec\CC[\hb']$, and $U_0\cap U_\infty=\Spec\CC[\hb,\hbm]=\Spec\CC[\hb',\hb^{\prime-1}]$ where the identification is given by $\hb'=\nobreak\hbm$. We will consider the involution $\iota:\PP^1\to\PP^1$ defined by $\iota(\hb)=-\hb$, and the anti-holomorphic involution $\gamma:\PP^1\to\ov\PP^1$ defined by $\gamma(\hb)=1/\ov\hb$. The composed involution $\gamma\circ\iota$ will be denoted by $\sigma$. When restricted to $\bbS\defin\{\hb\in U_0\mid|\hb|=1\}$, $\iota$ and $\sigma$ coincide, since $\gamma_{|\bbS}=\id_{\bbS}$.
\end{notation}

\num{}The terminology ``non-commutative Hodge structure'' (which should not be confused with that of non-abelian Hodge theory developed by C\ptbl Simpson \cite{Simpson91,Simpson97b,Simpson02}) has been introduced by Katzarkov-Kontsevich-Pantev \cite{K-K-P08} to cover the kind of Hodge structure one should expect on the periodic cyclic cohomology of smooth compact non-commutative spaces (although we will keep the setting of standard commutative algebraic geometry in this article and we will not provide any example in the setting of non-commutative spaces envisioned in \loccit, which we refer to for details, as well as to \cite{Kaledin08} and the recent preprints \cite{Kaledin10,Shklyarov11}).

Generalizations of the notion of a Hodge structure already occur for usual algebraic geometric objects, like polynomial functions, where such a structure is related to oscillating integrals. In this context, such a structure has been brought to light by Cecotti, Vafa et al\ptbl in various articles \cite{C-V91,C-F-I-V92,C-V93}, and formalized by C\ptbl Hertling in \cite{Hertling01} under the name of pure (and polarized) TERP structure, so as to treat variations of such objects. When such a structure occurs on the tangent bundle of a manifold, this produces a tt* geometry on this manifold.

By mirror symmetry, such a structure is expected on the quantum cohomology of some algebraic varieties, in the sense that the associated Frobenius manifold should underlie a variation of polarized non-commutative Hodge structures on its tangent bundle. Hertling (\cf \loccit) has also formalized the compatibility relations between the Frobenius manifold relations and the tt* geometry under the name of a C(ecotti)D(ubrovin)V(afa)-structure. Explicit formulas from the quantum cohomology point of view have been obtained by H\ptbl Iritani \cite{Iritani09,Iritani09b} (\cf also \cite{K-K-P08} for the projective space). We will not go further in this direction.

The purpose of this article is to survey recent developments concerning non-commutative Hodge structures by themselves. We will only mention some results concerning their variations and the corresponding limit theorems. One can already emphasize that these structures allow limiting behaviour with irregular singularities (wild behaviour), while usual variations of Hodge structures only allow limiting behaviour with regular singularities (tame behaviour). As a consequence, they fit with transformations which do not preserve the tame behaviour, like the Fourier-Laplace transformation.

The first attempt to provide a good Hodge theory in such a context seems to be the notes \cite{Deligne8406} by P\ptbl Deligne (the relation with \ncHodge structures is briefly discussed in \S\ref{sec:tamefunct} and with more details in \cite[\S6]{Bibi08}), and the expectation of a behaviour by Fourier transformation analogous to the case of Fourier-Deligne transformation in $\ell$-adic theory is clearly stated in \cite[Rem\ptbl7.3.3.3]{K-L85}. In some sense, Theorem \ref{th:FLTQ} answers this remark, at least in dimension one. On the other hand, the frame of such a theory, restricted to the regular singularity case, was in germ in the work of A.N\ptbl Varchenko \cite{Varchenko82} in Singularity theory, as well as in various later works concerning Hodge theory for isolated singularity of holomorphic functions.

\num{}
The underlying analytic theorems in this non-commutative Hodge theory apply to a wider context, that of \textit{twistor structures}. The main guiding article in this direction is due to C\ptbl Simpson \cite{Simpson97}, which unifies under the notion of twistor structure (and variation of such) various objects considered in nonabelian Hodge theory. The theory of variations of pure and polarized twistor structures has been completed by T\ptbl Mochizuki \cite{Mochizuki07} in the tame case, extending the previous work of Simpson in dimension one \cite{Simpson90}, and this work culminates with \cite{Mochizuki08}, where T\ptbl Mochizuki extends his previous results to the wild setting (see also \cite{Bibi01c,Bibi06b}).

Giving a pure and polarized twistor structure of weight $w\in\ZZ$ is equivalent to giving a complex vector space $H$ and a positive definite Hermitian form $h$ on it, together with an integer~$w$ (although giving a variation of such objects is more subtle, since it involves the notion of a harmonic metric \cite{Simpson92}). This can be encoded in a vector bundle of pure slope~$w$ on $\PP^1$ (which is now called a pure twistor structure of weight~$w$), together with the right replacement of a Hermitian form, called a polarization of this twistor structure.

\num{}
The next step consists in generalizing the notion of a polarized \textit{complex} Hodge structure. The bigrading is replaced with a meromorphic connection on the vector bundle on $\PP^1$ considered above, having a pole of order at most two at $0$ and $\infty$, and no other pole. The reason for restricting to poles of order two is explained in \S\ref{subsec:filteredspaces}. A vector bundle on $\PP^1$ with such a connection is called an integrable twistor structure (integrable because, when considering variations, it consists to adding an integrability condition to variations of twistor structures). It can also be described by linear algebra objects on $H$ (\cf \S\ref{subsec:linalg}), that is, endomorphisms $\cU,\cQ$ of $H$. However, variations of such objects produce non-trivial integrability conditions on these endomorphisms (\cf \cite{Hertling01}, \cite[Chap\ptbl7]{Bibi01c}). The endomorphism $\cQ$ is self-adjoint with respect to the metric $h$ and its eigenvalues play the role of the Hodge exponent~$p$ (more precisely, $p-w/2$) in $H^{p,w-p}$. However, these eigenvalues may vary in variations of pure polarizable integrable twistor structure (that we now call polarized pure complex \ncHodge structures).

\num{}
The main object in this article is the notion of a polarized \nckHodge structure, when $\kk$ is a subfield of $\RR$, \eg $\kk=\QQ$. When $\kk=\RR$, it corresponds to the notion of a pure polarized TERP structure as defined in \cite{Hertling01}, if one moreover takes care of a $\RR$-structure on the Stokes data of the connection, in case the singularity at~$0$ and~$\infty$ is irregular. We will recall the well-known analysis that we need of connections with a pole of order at most two in~\S\ref{subsec:Brieskorn}. The $\kk$-structure is included at the level of generalized monodromy data, \ie monodromy and (in the wild case) Stokes data. This way of treating the $\kk$-structure has been much generalized by T\ptbl Mochizuki in \cite{Mochizuki10} for arbitrary holonomic $\cD$-modules.

While one can define the notion of a pure \ncHodge structure without introducing a polarization (\cf\S\ref{num:nckHodge}), proving that a given set of data form a pure \ncHodge structure often uses the supplementary existence of a polarization. A simple, but nontrivial, example is given in \S\ref{sec:DM}, confirming a conjecture of C\ptbl Hertling and Ch\ptbl Sevenheck in~\cite{H-S06}, that they proved in some special cases. The main argument (\cf \cite{H-S09}) relies on a general way to produce polarized complex Hodge structures, which is given in \cite{Bibi05}. Namely, given a variation of polarized complex Hodge structure on the complex punctured affine line (a finite number of points deleted), one can associate, by Fourier-Laplace transformation, a polarized complex \ncHodge structure. This is reviewed in \S\ref{sec:FLT}, where we also give some complements for pure polarized \nckHodge structures. The same idea is used (\cf \cite{Bibi05}) to prove that the Brieskorn lattice of a regular function on an affine manifold with isolated singularities and a tame behaviour of the fibres at infinity (\eg a convenient and non-degenerate (Laurent) polynomial) underlies a rational \ncHodge structure. Results of this kind are reviewed in \S\ref{sec:tamefunct}. Meanwhile, we compare in \S\ref{num:expHS} the notion of \ncHodge structure with that of an exponential pure Hodge structure, introduced in \cite{K-S10}.

\num{}
Lastly, we consider numerical invariants of \ncHodge structures. There are various ways to replace the exponent~$p$ in $H^{p,q}$ (Hodge structure of weight~$w$, so that $q=w-p$). These are called the spectral numbers at $\hb=0$, the spectral numbers at $\hb=\infty$, and the supersymmetric index. The latter may vary in a real analytic way in variations of \ncHodge structures, hence is difficult to compute in general, being of a very transcendental nature. On the other hand, the definition of spectral numbers at $\hb=0$ goes back to Varchenko \cite{Varchenko82} and Steenbrink (\cf \cite{Steenbrink87}) in Singularity theory, and the spectral numbers at $\hb=\infty$ were introduced in \cite{Bibi96b} (\cf also \cite{Bibi96bb}). Many authors have considered these invariants for local or global singularities. For TERP$(w)$ structures, they should remain constant when defining classifying spaces (\cf \cite{H-S07, H-S08b,H-S08}), an important question when considering associated period mappings.

\num{}
In order to keep reasonable length to this paper, we do not treat with details variations of \ncHodge structures and their limits. We would like to emphasize, however, that the many-variable nilpotent orbit theorem of \cite{C-K-S86} and \cite{Kashiwara85} has now a ``wild twistor'' counterpart \cite{Mochizuki08}, as well as a TERP counterpart \cite{H-S07}, \cite{Mochizuki08b}. On the other hand, the $\QQ$-structure can also been considered, according to \cite{Mochizuki10} (see also \cite{Bibi10}).

\nums{Acknowledgements}
I thank Claus Hertling for reading a preliminary version of the manuscript and useful comments. The content of this survey article owes much to discussions and collaboration with him. I also thank Takuro Mochizuki, Christian Sevenheck and Jean-Baptiste Teyssier for their questions and suggestions, and for the various discussions we had.

\section{Connections with a pole of order two}\label{subsec:Brieskorn}

\paragraph{Filtered vector spaces with automorphism}\label{subsec:filteredspaces}
We will regard connections with a pole of order (at most) two as a suitable generalization of the notion of a filtered complex vector space equipped with an automorphism in the following sense. Let $V$ be a finite dimensional complex vector space, and let $\rT$ be an automorphism of $V$. We denote by $\rT_\mathrm{s}$ its semi-simple part and by $\rT_\mathrm{u}$ its unipotent part, we set $\rN=\itwopi\log\rT_\mathrm{u}$ and we choose a logarithm $\rD_\mathrm{s}=\itwopi\log\rT_\mathrm{s}$. By a filtration of $(V,\rT)$ we will mean an exhaustive decreasing filtration $F^\cbbullet V$ of $V$ indexed by $\ZZ$ which is stable by $\rT_\mathrm{s}$ and such that $\rN(F^pV)\subset F^{p-1}V$ for each $p\in\ZZ$. In particular, $(V,\rT,F^\cbbullet V)$ decomposes according to the eigenvalue decomposition of $(V,\rT_\mathrm{s})$.

We also call $(V,\rT)$ the associated Betti structure of $(V,\rT,F^\cbbullet V)$ and, if~$\kk$ is a subfield of $\CC$ (\eg $\kk=\QQ$, $\RR$ or $\CC$), we say that the Betti structure is defined over $\kk$ if $(V,\rT)=\CC\otimes_\kk(V_\kk,\rT_\kk)$. The objects $(V_\kk,\rT_\kk,F^\cbbullet V)$ obviously form a category, with a duality functor and a tensor product.

We associate to these data a connection with a pole of order two, that is, the free $\CC[\hb]$-module $R_FV\defin\bigoplus_pF^pV\hb^{-p}$ equipped with the connection $\nabla=\rd+\nobreak(\rD_\mathrm{s}+\nobreak\rN/\hb)\rd\hb/\hb$. This connection has a pole of order two at $\hb=0$, but has regular singularity there, and the monodromy is given by $(V,\rT)$ (choose a semi-simple endomorphism $\rH$ with half-integral eigenvalues which commutes with $\rD_\mathrm{s}$ and such that $[\rH,\rN]=\rN$, and apply the base change~$\hb^\rH$ after a possible ramification of order two).

In the following, we will restrict to the case where \textit{the eigenvalues of $\rT$ have absolute value equal to~$1$}.

\paragraph{Connections with a pole of order two (regular singularity case)}\label{subsec:Brieskornregular}
By a \textit{connection with a pole of order two} $(\cH,\nabla)$ we mean a free $\CC\{\hb\}$-module $\cH$ of finite rank equipped with a connection $\nabla$ having a \textit{pole of order at most two}.

We set $\cG=\CC\lap\hb\rap\otimes_{\CC\{\hb\}}\cH$ (where $\CC\lap\hb\rap$ denotes the field $\CC\{\hb\}[\hbm]$ of convergent Laurent series in the variable $\hb$), equipped with the induced connection $\nabla$. \textit{We assume in this subsection that $(\cG,\nabla)$ has a regular singularity}. There exists then a canonical decreasing filtration $V^\cbbullet\cG$ indexed by $\RR$, but with a finite number of jumps modulo~$\ZZ$, such that each $V^a\cG$ is a free $\CC\{\hb\}$-submodule of $\cG$ of rank equal to $\dim_{\CC\lap\hb\rap}\cG$, on which the connection has a pole of order at most one whose residue has eigenvalues with real part in $[a,a+1)$, and such that $\hb^kV^a\cG=V^{a+k}\cG$ for each $k\in\ZZ$ and each $a\in\RR$. In the following, \textit{we will assume that the eigenvalues of the residue are real}. We will also denote by $(\cH,\nabla)$ the unique extension of the germ $(\cH,\nabla)$ as a free $\cO_{U_0}$-module (\cf Notation \ref{notation}) equipped with a meromorphic connection having a pole of order two at the origin and no other pole. If needed, we will use the unique $\CC[\hb]$-submodule of $\Gamma(U_0,\cH)$ (Birkhoff extension) on which the connection has a regular singularity at infinity (by algebraizing the Deligne meromorphic extension of $(\cH,\nabla)$ at infinity). Recall that the functor $\CC\lap\hb\rap\otimes_{\CC[\hb,\hbm]}$ (\resp $\CC\{\hb\}\otimes_{\CC[\hb]}$) induces an equivalence between the category of free $\CC[\hb,\hbm]$-modules (\resp free $\CC[\hb]$-modules) with a connection $\nabla$ having poles at $0$ and $\infty$ only, the pole at $\infty$ being a regular singularity, and $\CC\lap\hb\rap$-vector spaces (\resp free $\CC\{\hb\}$-modules) with a connection. We will implicitly use this functor.

\num{The Betti structure}
By using the $\cO_{U_0}$-module approach, we consider the local system $\cL$ on $\CC^*$ defined as $\ker\nabla_{|\CC^*}$, that we call the \textit{Betti structure of the connection with a pole of order two}. Our previous assumption amounts to assuming that the eigenvalues of the monodromy of $\cL$ have absolute value equal to~$1$. A $\kk$-structure of the connection with a pole of order two $(\cH,\nabla)$ consists by definition of a $\kk$-structure of the corresponding local system $\cL$. The objects $(\cH,\nabla,\cL_\kk)$ of connections with a pole of order two with a $\kk$-Betti structure obviously form a category with a duality functor and a tensor product.

\num{$V$-gradation}
The gradation functor replaces $(\cG,\nabla)$ with $(\gr_V\cG,\gr_V\nabla)$, where we have set $\gr_V\cG=\bigoplus_{a\in\RR}(V^a\cG/V^{>a}\cG)$ and where $\gr_V\nabla$ is the naturally induced connection: for a fixed $a\in\RR$, $\bigoplus_{k\in\ZZ}(V^{a+k}\cG/V^{>(a+k)}\cG)$ is isomorphic to $\CC[\hb,\hbm]\otimes_\CC(V^a\cG/V^{>a}\cG)$ and the connection is defined as $\rd+(\Res_a\nabla)\rd\hb/\hb$, with $\Res_a\nabla=a\id+\rN_a$ with $\rN_a$ nilpotent. 

That $\nabla$ has a regular singularity implies that $(\cG,\nabla)\simeq(\gr_V\cG,\gr_V\nabla)$. The connection with a pole of order two induces a graded connection with a pole of order two $(\gr_V\cH,\nabla)$ in a natural way, although it is not isomorphic to $(\cH,\nabla)$ in general.

\begin{lemme}
The functor $(\cH,\nabla)\mto(V,\rT,F^\cbbullet V)$, where
\begin{gather*}
V=\tbigoplus_{a\in(-1,0]}\gr_V^a\cG,\qquad
\rT=\tbigoplus_{a\in(-1,0]}\exp(-2\pi i\Res_a\nabla),\\
F^pV=\tbigoplus_{a\in(-1,0]}\gr_V^a(\hb^p\cH),
\end{gather*}
induces an equivalence between the subcategory of graded connections with a pole of order two and that of filtered vector spaces with an automorphism as in \S\ref{subsec:filteredspaces}. This functor extends naturally to objects with a $\kk$-structure.
\end{lemme}

\begin{proof}
For the first part, \cf \eg \cite[Chap\ptbl III]{Bibi00}. Let us make precise that the gradation functor is the identity at the Betti level. This will imply that the $\kk$-structure on $(\cG,\nabla)$ induces a $\kk$-structure on $(\gr_V\cG,\gr_V\nabla)$. Recall that the functor of taking global multi-valued sections induces an equivalence between the category of local systems~$\cL$ to the category of vector spaces $(L,\rT)$ with an automorphism. The composed functor $(\cG,\nabla)\mto\cL\mto(L,\rT)$ is identified with
\[
(\cG,\nabla)\mto\ker[\nabla_{\partial_\hb}:\wt\cO\otimes_{\CC\lap\hb\rap}\cG\ra\wt\cO\otimes_{\CC\lap\hb\rap}\cG],
\]
where $\wt\cO$ denotes the space of germs at the origin of multi-valued holomorphic functions on $\CC^*$, equipped with its natural monodromy operator. Let $\cN_{-a}\subset\wt\cO$ be the space of linear combinations with coefficients in $\CC\lap\hb\rap$ of the multi-valued functions $\hb^{-a}(\log\hb)^\ell/\ell!$, with its natural connection, and set $\cN=\bigoplus_{a\in(-1,0]}\cN_{-a}$. Then on the one hand, the natural inclusion of $\ker[\nabla_{\partial_\hb}:\cN\otimes_{\CC\lap\hb\rap}\cG\to\cN\otimes_{\CC\lap\hb\rap}\cG]$ into $\ker[\nabla_{\partial_\hb}:\wt\cO\otimes_{\CC\lap\hb\rap}\cG\to\wt\cO\otimes_{\CC\lap\hb\rap}\cG]$ is an isomorphism compatible with monodromy, and on the other hand, the first term is canonically identified to $(V,\rT)$ as defined in the lemma. Since the gradation functor is the identity at the $(V,\rT)$ level, it remains the identity through the previous canonical functors at the $(L,\rT)$ level, hence at the Betti level.
\end{proof}

\begin{comment}
\num{Computation with matrices}

Let $\bmv_0$ be a basis of $\cG$ in which the basis of the connection takes the form $A\rd\hb/\hb$, where $A$ is a constant matrix with eigenvalues contained in $(-1,0]$. We thus have $\hb\nabla\bmv_0=\bmv_0\cdot A\rd\hb$. By definition, the family $\bmv_0$ is a $\CC\{\hb\}$-basis of the free module $V^{>-1}\cG$.

We denote by $\log\hb$ the principal determination of $\log$. We have a basis of horizontal sections of $\nabla$ on $\CC\moins\RR_-$, given by
\[
\wt\bmv_0=\bmv_0\cdot\hb^{-A}=\bmv_0\cdot e^{-A\log\hb},
\]
and the monodromy $\rT$ by
\[
\rT(\wt\bmv_0)=\bmv_0\cdot e^{-A(\log\hb+2\pi i)}=\wt\bmv_0\cdot e^{-2\pi i A}.
\]
A $\kk$-structure on the local system of horizontal sections consists in giving a class of bases $\epsilong=\wt\bmv_0\cdot C$ (modulo base changes with entries in $\kk$) such that the monodromy matrix in some (or each) of these bases has entries in~$\kk$. The monodromy matrix in the basis $\epsilong$ is equal to $C^{-1}e^{-2\pi i A}C$. It has entries in $\kk=\RR$ if and only if
\[
\ov CC^{-1}e^{-2\pi i A}=e^{2\pi i \ov A}\ov CC^{-1}.
\]
\end{comment}

\paragraph{Connections with a pole of order two (nr.\,exponential case)}\label{subsec:Brieskornexp}
We now relax the condition that $(\cG,\nabla)$ has a regular singularity. Since~$\nabla$ has a pole of order at most two on $\cH$, the connection has slopes $\leq1$ (\cf \eg \cite{Malgrange91}) and we say, following \cite[Chap\ptbl XII]{Malgrange91}, that $(\cG,\nabla)$ has \textit{exponential type}. We will moreover assume that the slopes are either~$0$ or~$1$. This is equivalent to the property that  the Levelt-Turrittin decomposition of $(\wh\cG,\wh\nabla)\defin\CC\lpr\hb\rpr\otimes_{\CC\lap\hb\rap}(\cG,\nabla)$ needs \textit{no ramification}, that is, $(\wh\cG,\wh\nabla)$ is isomorphic to a finite sum indexed by a finite subset $C\subset\CC$:
\begin{equation}\label{eq:formalization}
(\wh\cG,\wh\nabla)\simeq\tbigoplus_{c\in C}(\cE^{-c/\hb}\otimes\wh\cG_c),
\end{equation}
where $(\wh\cG_c,\nabla_c)$ is a connection with regular singularities and $\cE^{-c/\hb}\otimes\wh\cG_c$ is the $\CC\lpr\hb\rpr$-vector space $\wh\cG_c$ equipped with the connection $\nabla_c+c\id \rd\hb/\hb^2$. It is known (\cf\eg \cite[Rem\ptbl II.5.8]{Bibi00}) that, in such a case, $(\wh\cH,\wh\nabla)$ decomposes accordingly, as
\begin{equation}\label{eq:formalizationH}
(\wh\cH,\wh\nabla)\defin\CC\lcr\hb\rcr\otimes_{\CC\{\hb\}}(\cH,\nabla)\simeq\tbigoplus_{c\in C}(\cE^{-c/\hb}\otimes\wh\cH_c),
\end{equation}
where $(\cH_c,\nabla_c)$ is a connection with a pole of order two with regular singularities as in \S\ref{subsec:Brieskornregular}. All over this article, \textit{\nrexp} will be a shortcut for \textit{exponential type with no ramification}.

\begin{exemple}
If the leading matrix of $\nabla$ in some $\CC\{\hb\}$-basis of $\cH$ is semi-simple, then $(\cH,\nabla)$ has \nrexp. On the other hand, consider the connection with matrix
\[
A(\hb)\rd\hb:=P(\hb)\Big(\frac Y\hb+\id\Big)P(\hb)^{-1}\cdot\frac{\rd\hb}\hb,
\]
with
\[\arraycolsep3.5pt
Y=\begin{pmatrix}0&0&0\\1&0&0\\0&1&0\end{pmatrix},\quad
P(\hb)=\id+\hb Z,\quad
Z=\begin{pmatrix}0&0&1\\0&0&0\\0&0&0\end{pmatrix}.
\]
Then the matrix $\hb^2A(\hb)=P(Y+\hb\id)P^{-1}$ has characteristic polynomial equal to~$\chi(\lambda)=(\lambda-\nobreak\hb)^3$, but one can check that it is of exponential type but needs ramification (compare with \cite[Rem\ptbl2.13]{K-K-P08}).
\end{exemple}

\num{The Betti structure (Stokes filtration)}\label{num:Bettik}
The local system $\cL$ attached to $(\cG,\nabla)$ comes equipped with a family of pairs of subsheaves $\cL_{<c}\subset\cL_{\leq c}$ for each $c\in\CC$, which satisfies the properties below (\cf \cite{Deligne78}, \cite{Malgrange91}, \cite{H-S09}, \cite[Lect\ptbl2]{Bibi10}). For a fixed $\hb\in\CC^*$, define a partial order $\leqhb$ on $\CC$ compatible with addition by setting $c\leqhb0$ iff $c=0$ or $\reel(c/\hb)<0$ (and $c\lehb0$ iff $c\neq0$ and $\reel(c/\hb)<0$). This partial order on $\CC$ only depends on $\hb/|\hb|$. The required properties are as follows.
\begin{itemize}
\item
For each $\hb\in\CC^*$, the germs $\cL_{\leq c,\hb}$ form an exhaustive increasing filtration of $\cL_\hb$, compatible with the order $\leqhb$.
\item
For each $\hb\in\CC^*$, the germ $\cL_{<c,\hb}$ can be recovered as $\sum_{c'\lehb c}\cL_{\leq c',\hb}$.
\item
The graded sheaves $\cL_{\leq c}/\cL_{<c}$ are local systems on $\CC^*$.
\item
The rank of $\bigoplus_{c\in\CC}\cL_{\leq c}/\cL_{<c}$ is equal to the rank of $\cL$, so that both local systems are locally isomorphic, and there is only a finite set $C\subset\CC$ of jumping indices.
\end{itemize}
We will say that $(\cL,\cL_\bbullet)$ is a \textit{Stokes-filtered local system of \nrexp}, or simply a \textit{Stokes-filtered local system}, as we will only consider those of \nrexp in this article. A \textit{$\kk$-structure} consists of a Stokes-filtered local system $(\cL_\kk,\cL_{\kk,\bbullet})$ such that $(\cL,\cL_\bbullet)=\CC\otimes_\kk(\cL_\kk,\cL_{\kk,\bbullet})$ (\cf \cite[Def\ptbl2.14]{K-K-P08} and \cite[Prop\ptbl2.26]{Bibi10} for an equivalent definition). In particular, the monodromy of $\cL$ is defined over $\kk$ and, if $\kk\subset\RR$, this implies that $\reel\Tr(\Res\nabla)\in\hZZ$. On the other hand, recall that the Riemann-Hilbert functor $(\cG,\nabla)\mto(\cL,\cL_\bbullet)$ is an equivalence of categories (\cf \cite{Deligne78} or \cite[p\ptbl58]{Malgrange91}) compatible with duality and tensor product.

The decomposition \eqref{eq:formalization} or \eqref{eq:formalizationH} is unique, and the formalization functor $(\cG,\nabla)$ corresponds, via the Riemann-Hilbert functor $(\cG,\nabla)\mto(\cL,\cL_\bbullet)$ to the Stokes grading functor $(\cL,\cL_\bbullet)\mto\bigoplus_{c\in\CC}\cL_{\leq c}/\cL_{<c}$, so that the local system associated to $\cG_c$ is $\gr_c\cL$ (\cf \loccit). As a consequence, a $\kk$\nobreakdash-structure on $(\cH,\nabla)$ induces a $\kk$-structure on each $(\cH_c,\nabla_c)$.

\num{The Betti structure (Stokes data)}\label{num:BettiStokesdata}
The previous description of the Betti structure is independent of any choice. On the other hand, the description with Stokes data below depends on some choices (\cf \eg \cite{H-S09} for details). Let~$C$ be a non-empty finite subset of $\CC$. We say that $\theta_o\in\RR/2\pi\ZZ$ is \textit{generic} with respect to $C$ if the set $C$ is totally ordered with respect to $\leqhb$ when $\hb=e^{i\theta_o}$. Once~$\theta_o$ generic with respect to~$C$ is chosen, there is a unique numbering $\{c_1,\dots,c_n\}$ of the set~$C$ in strictly increasing order. We will set $\theta'_o=\theta_o+\pi$. Note that the order is exactly reversed at~$\theta'_o$, so that $-C$ is numbered as $\{-c_1,\dots,-c_n\}$ by $\theta'_o$.

The category of Stokes data of type $(C,\theta_o)$ defined over $\kk$ has objects consisting of two families of $\kk$-vector spaces $(L_{c,1},L_{c,2})_{c\in C}$ and a diagram of morphisms
\begin{equation}\label{eq:catStokesdata}
\begin{array}{c}
\xymatrix@C=1.5cm{
L_1=\tbigoplus_{i=1}^nL_{c_i,1}\ar@<-2mm>@/^1.7pc/[r]^-{S}\ar@<2mm>@/_1.7pc/[r]_-{S'}&L_2=\tbigoplus_{i=1}^nL_{c_i,2}
}
\end{array}
\end{equation}
such that
\begin{enumerate}
\item
$S=(S_{ij})_{i,j=1,\dots, n}$ is block-upper triangular, \ie $S_{ij}:L_{c_i,1}\to L_{c_j,2}$ is zero unless $i\leq j$, and $S_{ii}$ is invertible (so $\dim L_{c_i,1}=\dim L_{c_i,2}$, and $S$ itself is invertible),
\item
$S'=(S'_{ij})_{i,j=1,\dots, n}$ is block-lower triangular, \ie $S'_{ij}:L_{c_i,1}\to L_{c_j,2}$ is zero unless $i\geq j$, and $S'_{ii}$ is invertible (so $S'$ itself is invertible).
\end{enumerate}

A morphism of Stokes data of type $(C,\theta_o)$ consists of morphisms of $\kk$\nobreakdash-vector spaces $\lambda_{c,\ell}:L_{c,\ell}\to L'_{c,\ell}$, $c\in C$, $\ell=1,2$, which are compatible with the corresponding diagrams \eqref{eq:catStokesdata}. This allows one to classify Stokes data of type $(C,\theta_o)$ up to isomorphism. The monodromy $\rT_1$ on $L_1$ is defined by $\rT_1=S^{-1}S'$. \textit{Grading} the Stokes data means replacing $(S,S')$ with their block diagonal parts. There is a natural notion of tensor product in the category of Stokes data of type $(C,\theta_o)$, and a duality from Stokes data of type $(C,\theta_o)$ to Stokes data of type $(-C,\theta_o)$.

Fixing bases in the spaces $L_{c,\ell}$, $c\in C$, $\ell=1,2$, allows one to present Stokes data by matrices $(\Sigma,\Sigma')$ where $\Sigma=(\Sigma_{ij})_{i,j=1,\dots, n}$ (\resp $\Sigma'=(\Sigma'_{ij})_{i,j=1,\dots, n}$) is block-lower (\resp -upper) triangular and each $\Sigma_{ii}$ (\resp $\Sigma'_{ii}$) is invertible. The matrix $\Sigma_{ii}^{-1}\Sigma'_{ii}$ is the matrix of monodromy of $L_{c_i,1}$, while $\Sigma^{-1}\Sigma'$ is that of the monodromy of $L_1$.

Given $\theta_o$ generic with respect to $C$, there is an equivalence (depending on $\theta_o$) between the category of Stokes filtered local systems $(\cL_\kk,\cL_{\kk,\bbullet})$ defined over $\kk$ with jumping indices in $C$ and that of Stokes data of type $(C,\theta_o)$ defined over $\kk$, which is compatible with grading, duality and tensor product (\cf\eg\cite[\S2]{H-S09}).

\paragraph{Connections with a pole of order two of \nrexp obtained by Laplace transformation}\label{subsec:Laplace}\mbox{}%

\num{Inverse Laplace transformation}\label{num:Laplace}
The condition for a meromorphic connection $(\cG,\nabla)$ to have only slopes $0$ and $1$ is equivalent to the property that $(\cG,\nabla)$ is obtained by a Laplace transformation procedure from a meromorphic connection with regular singularity. Let us make this precise. Recall that, by extending $(\cG,\nabla)$ as a free $\cO_\CC(*0)$-module with connection having a pole at $\hb=0$ only, then choosing the Deligne extension as a $\cO_{\PP^1}(*\{0,\infty\})$-module on which the connection has a regular singularity at infinity, and then taking global sections, we find a free $\CC[\hb,\hbm]$-module with connection $(G,\nabla)$. Then $G$ is a left $\CC[\hb']\langle\partial_{\hb'}\rangle$-module ($\hb'\defin\hbm$). If we identify the ring $\CC[\hb']\langle\partial_{\hb'}\rangle$ to $\CC[t]\langle\partial_t\rangle$ by the isomorphism $\hb'=\partial_t$, $\partial_{\hb'}=-t$, then $G$ is a $\CC[t]\langle\partial_t\rangle$-module that we denote by~$\Foub G$. The condition that $(\cG,\nabla)$ is of \nrexp is equivalent to the condition that~$\Foub G$ has only \textit{regular singularities}, at finite distance and at infinity, on the complex $t$-line (\cf\eg\cite[Lemma 1.5]{Bibi08}). We call $\Foub G$ the inverse Laplace transform of $(G,\nabla)$. Equivalently, $(G,\nabla)$ is the Laplace transform of $\Foub G$ with kernel~$e^{-t\hb'}$, and we use the notation $G=\Fou(\Foub G)$. Note that the action of $\partial_t$ on~$\Foub G$ is bijective.

\num{Minimal extension and Brieskorn lattice}\label{num:minextBr}
We will have to consider the \textit{minimal extension} $M$ of~$\Foub G$. By definition, this is the unique $\Clt$-submodule of $\Foub G$ such that~$M$ has neither sub- nor quotient $\Clt$-module supported on a point. The Laplace transform $\Fou M$ of $M$ satisfies in turn $\CC[\hb',\hb^{\prime-1}]\otimes_{\CC[\hb']}\Fou M=G$.

Any good filtration of $M$ (in the sense of $\CC[t]\langle\partial_t\rangle$-modules) gives rise to a vector bundle $(\cH,\nabla)$ in $(\cG,\nabla)$ whose connection has a pole of order two. We will not recall its construction here, \cf \cite[\S1.d]{Bibi05}, \cite[\S1.c]{Bibi08} for details. It is called the \textit{Brieskorn lattice of the good filtration}.

\num{Betti structure}\label{num:LaplaceBetti}
We can pass from the Betti structure of $M$ to that of~$\Fou M$ by the \textit{topological Laplace transformation}, and the way back by the inverse topological Laplace transformation (\cf\cite{Malgrange91}).

Let $\ccV$ be the local system of horizontal sections of $M$ away from its singularities $C\subset\Afu$. Assume that it is defined over $\kk$, that is, $\ccV=\CC\otimes_\kk\ccV_\kk$. Let $j:X=\Afu\moins C\hto\Afu$ denote the inclusion. The analytic de~Rham complex $\DR^\an M$ on $\Afu$ has cohomology in degree zero only, equal to $j_*\ccV$, hence has a natural $\kk$-structure given by $j_*\ccV_\kk$.

The topological Laplace transform of the $\kk$-perverse sheaf $\cF\defin j_*\ccV_\kk[1]$ is defined in \cite[Chap\ptbl VI,\,\S2]{Malgrange91} (\cf also \cite[\S7.d]{Bibi10}, and \cite{K-K-P08} for a different approach), as a perverse sheaf on $\Afu_{\hb'}$ with a Stokes structure at infinity, that we denote by $(\FcF,\FcF_\bbullet)$. Forgetting the behaviour of this object near $\hb'=0$ (this corresponds to tensoring $\Fou M$ with $\CC[\hb',\hb^{\prime-1}]$, that is, to considering $G$) allows one to describe it as a Stokes-filtered local system $(\cL_\kk,\cL_{\kk,\bbullet})$. One could also use classical integral formulas for the Stokes matrices (\cf \cite{B-J-L81}) to describe the $\kk$-structure.

Notice that in \cite[Prop\ptbl4.7]{H-S09} one finds conversely the description of $\ccV_\kk$ in terms of $(\cL_\kk,\cL_{\kk,\bbullet})$.

Let now $Q_\rB$ be a nondegenerate pairing $\ccV_\kk\otimes\ccV_\kk\to\kk_X$. It extends as a nondegenerate pairing $j_*Q_\rB:j_*\ccV_\kk\otimes j_*\ccV_\kk\to\kk_{\Afu}$. By topological Laplace transformation, we get a pairing $\wh{j_*Q_\rB}:(\cL_\kk,\cL_{\kk,\bbullet})\otimes\iota^{-1}(\cL_\kk,\cL_{\kk,\bbullet})\to\kk_\bbS$. Its germ on $\cL_\kk\otimes\iota^{-1}\cL_\kk$ (that is, forgetting the Stokes filtration) at $\hbo\in\bbS$ is described as follows. Let $\wt\PP^1$ be oriented real blow-up space of $\PP^1$ at $t=\infty$. This is topologically a disc, obtained by adding a boundary $S^1$ to $\Afu$, with coordinate $(\infty,e^{i\theta})$. For each $\hbo\in\bbS$, let $\Phi_{\pm\hbo}$ denote the family of closed sets of $\Afu$ whose closure in $\wt\PP^1$ does not cut the closed set $\{(\infty,e^{i\theta})\mid\reel(e^{i\theta}/\pm\hbo)\geq0\}$. Then $\wh{j_*Q_\rB}_{\hbo}$ is the pairing induced by the cup product followed by $Q_\rB$:
\[
H^1_{\Phi_{\hbo}}(\Afu,j_*\ccV_\kk)\otimes H^1_{\Phi_{-\hbo}}(\Afu,j_*\ccV_\kk)\to H^2_\rc(\Afu,\QQ)\simeq\QQ,
\]
where we remark that the intersection family $\Phi_{\hbo}\cap \Phi_{-\hbo}$ is that of compact sets in~$\Afu$.

\begin{proposition}\label{prop:whQ}
The Laplace transform $\wh{j_*Q_\rB}$ induces a nondegenerate pairing $(\cL_\kk,\cL_{\kk,\bbullet})\otimes\iota^{-1}(\cL_\kk,\cL_{\kk,\bbullet})\to\kk_\bbS$.
\end{proposition}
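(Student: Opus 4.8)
The plan is to verify the nondegeneracy of $\wh{j_*Q_\rB}$ pointwise on $\bbS$, using the explicit cup-product description recalled just above. Since $\wh{j_*Q_\rB}$ is a morphism of local systems on $\bbS$, it is enough to show that, for each $\hbo\in\bbS$, the germ
\[
\wh{j_*Q_\rB}_{\hbo}:H^1_{\Phi_{\hbo}}(\Afu,j_*\ccV_\kk)\otimes H^1_{\Phi_{-\hbo}}(\Afu,j_*\ccV_\kk)\to H^2_\rc(\Afu,\kk)\simeq\kk
\]
is a perfect pairing of finite-dimensional $\kk$-vector spaces (the ranks on both sides agree, being equal to that of $\cL_\kk$, so nondegeneracy on one side suffices).

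The key point is that $\Phi_{\hbo}$ and $\Phi_{-\hbo}$ form a pair of \emph{complementary} families of supports on the oriented real surface $\Afu$. Indeed, by their very definition on $\wt\PP^1$, they consist of the closed subsets of $\Afu$ whose closure avoids, respectively, the two closed half-circles $\{(\infty,e^{i\theta})\mid\reel(e^{i\theta}/\hbo)\ge0\}$ and $\{(\infty,e^{i\theta})\mid\reel(e^{i\theta}/\hbo)\le0\}$ of the boundary at infinity; these half-circles cover the whole boundary circle, so a closed set belonging to both families has compact closure in $\Afu$, whence $\Phi_{\hbo}\cap\Phi_{-\hbo}$ is the family of compact subsets, as already observed. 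This is exactly the configuration in which Poincar\'e--Lefschetz duality with supports produces a perfect pairing between $H^k_{\Phi_{\hbo}}$ and $H^{2-k}_{\Phi_{-\hbo}}$ with values in $H^2_\rc(\Afu,\kk)\simeq\kk$.

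The coefficient duality needed to run this argument is furnished by $Q_\rB$. The pairing $Q_\rB$ identifies $\ccV_\kk$ with its dual local system on $X$, and $j_*Q_\rB$ extends this identification to $j_*\ccV_\kk$; on the curve $\Afu$ the shifted object $j_*\ccV_\kk[1]$ is the intermediate extension $j_{!*}(\ccV_\kk[1])$, hence is Verdier self-dual, the self-duality being the one induced by $j_*Q_\rB$. Combining this self-duality with the duality between the complementary supports $\Phi_{\pm\hbo}$ yields the perfectness of $\wh{j_*Q_\rB}_{\hbo}$ for every $\hbo\in\bbS$, hence the nondegeneracy of $\wh{j_*Q_\rB}$ as a pairing of local systems; its compatibility with the Stokes filtrations (which is built into the topological Laplace transformation) then gives the asserted pairing $(\cL_\kk,\cL_{\kk,\bbullet})\otimes\iota^{-1}(\cL_\kk,\cL_{\kk,\bbullet})\to\kk_\bbS$.

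The main obstacle I expect is setting up Poincar\'e--Lefschetz duality rigorously for the \emph{non-standard} families $\Phi_{\pm\hbo}$, which are cut out by a half-plane-at-infinity condition on the real blow-up $\wt\PP^1$ rather than by ordinary closed or compact support conditions. One must check that these families are paracompactifying and genuinely dual to one another, and control the contributions both at the finite singular points $C$---where the self-duality of the middle extension is precisely what prevents spurious boundary terms---and along the boundary circle at infinity. An alternative that sidesteps these point-set issues is to invoke the compatibility of the topological Laplace transformation with duality established in Malgrange's construction: it transforms the self-duality of $j_*\ccV_\kk$ carried by $j_*Q_\rB$ into a self-duality of $(\cL_\kk,\cL_{\kk,\bbullet})$, namely $\wh{j_*Q_\rB}$, so that nondegeneracy of the latter follows formally from that of $Q_\rB$.
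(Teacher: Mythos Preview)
Your argument for nondegeneracy on the \emph{underlying} local system via Poincar\'e--Lefschetz duality with the complementary support families $\Phi_{\pm\hbo}$ is correct and matches the paper's opening remark that ``this follows from Poincar\'e duality if we forget the Stokes filtration.'' The gap is in your treatment of the Stokes filtration. You write that compatibility with the Stokes filtration is ``built into the topological Laplace transformation,'' and that your alternative is to ``invoke'' its compatibility with duality from Malgrange; but this compatibility is precisely what has to be \emph{proved}, and it is the actual content of the proposition. Nondegeneracy of a pairing of Stokes-filtered local systems means that the induced map $(\cL_\kk,\cL_{\kk,\bbullet})\to\iota^{-1}(\cL_\kk,\cL_{\kk,\bbullet})^\vee$ is an isomorphism in that category, i.e., that it identifies $\cL_{\kk,\leq c}$ with the annihilator of $\iota^{-1}\cL_{\kk,<-c}$ for each $c$; this does not follow from nondegeneracy on $\cL_\kk$ alone, nor is it something the construction of the topological Laplace transform hands you for free.

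The paper supplies this missing step by establishing, for $\cF=j_*\ccV_\kk[1]$ and every $c\in\CC$, the exchange isomorphisms
\[
\Fou(\bD\cF)_{<c}\simeq\iota^{-1}\bD\bigl((\Fou\cF)_{\leq c}\bigr),\qquad
\Fou(\bD\cF)_{\leq c}\simeq\iota^{-1}\bD\bigl((\Fou\cF)_{<c}\bigr),
\]
which together say exactly that the topological Laplace transform intertwines Verdier duality with Stokes-filtered duality. These are obtained not on $\Afu_t$ but on the two-dimensional space of variables $(t,\hb')$, suitably blown up: one proves a local duality statement there and then uses the commutation of $\bD$ with the smooth pull-back and proper push-forward that make up the integral formula for the topological Laplace transform. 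Your one-variable Poincar\'e--Lefschetz computation, however carefully the families $\Phi_{\pm\hbo}$ are set up, cannot see the index $c$ of the Stokes filtration, since that grading lives on the $\hb'$-side; the two-variable picture is essential.
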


\begin{proof}[Sketch of proof]
This follows from Poincaré duality if we forget the Stokes filtration, and we have to check that the Stokes filtration behaves correctly. Let us denote by~$\bD$ the Poincaré-Verdier duality functor. Then one shows for any $c\in\CC$, the existence of isomorphisms
\begin{equation}\label{eq:DF}
\Fou(\bD\cF)_{<c}\simeq\iota^{-1}\bD(\FcF_{\leq c}),\quad\Fou(\bD\cF)_{\leq c}\simeq\iota^{-1}\bD(\FcF_{<c})
\end{equation}
which are exchanged by duality up to bi-duality isomorphisms. In order to do so, one uses an integral formula for the topological Laplace transformation (\cf\cite[\S VI.2]{Malgrange91}, see also \cite[Lect\ptbl7]{Bibi10}). These isomorphisms are obtained by a local duality statement on the two-dimensional space of variables $t,z'$ suitably blown-up, and then by using the commutation (up to a suitable shift) of Poincaré-Verdier duality with smooth pull-back and proper push-forward which enter in the definition of the topological Laplace transformation.
\end{proof}

\paragraph{Deligne-Malgrange lattices}\label{subsec:DML}
We explain here another example of pairs $(\cH,\nabla)$ of \nrexp. It can be obtained as the Brieskorn lattice of a natural filtration of $M$, namely the filtration by Deligne lattices.

Let $(\cG,\nabla)$ be a meromorphic connection of \nrexp, that is, satisfying \eqref{eq:formalization}. The functor which associates to any lattice $\cH$ of $\cG$ (\ie a $\CC\{\hb\}$-free submodule such that $\CC\lap\hb\rap\otimes_{\CC\{\hb\}}\cH=\cG$) its formalization $\CC\lcr\hb\rcr\otimes_{\CC\{\hb\}}\cH$ is an equivalence between the full subcategory of lattices of $\cG$ and that of lattices of $\wh\cG$ (\cf \cite{Malgrange95}). In particular, let us consider for each $c\in C$ and $a\in\RR$ the Deligne lattices $\wh\cG_c{}^a$ (\resp $\wh\cG_c{}^{>a}$) of the regular connection $(\wh\cG_c,\wh\nabla_c)$ considered in \eqref{eq:formalization}, characterized by the property that the connection $\wh\nabla_c$ on $\wh\cG_c^a$ has a simple pole and the real parts of the eigenvalues of its residue belong to $[a,a+1)$ (\resp to $(a,a+1]$). Clearly, $\wh\cG_c{}^{a+1}=\hb\wh\cG_c{}^a$ and $\wh\cG_c{}^{>a+1}=\hb\wh\cG_c{}^{>a}$.

According to the previous equivalence, there exist unique lattices of $\cG$, denoted by $\DM^a(\cG,\nabla)$ (\resp $\DM^{>a}(\cG,\nabla)$, which induce, by formalization, the decomposed lattice $\bigoplus_c\cE^{-c/\hb}\otimes\wh\cG_c{}^a$ (\resp $\bigoplus_c\cE^{-c/\hb}\otimes\wh\cG_c{}^{>a}$). They are called the Deligne-Malgrange lattices of $(\cG,\nabla)$. We regard them as defining a decreasing filtration of $\cG$.

\begin{lemme}
Any morphism $(\cG,\nabla)\to(\cG',\nabla')$ of meromorphic connections of \nrexp is strictly compatible with the filtration by Deligne-Malgrange lattices.
\end{lemme}

\begin{proof}[Sketch of proof]
The associated formal morphism is block-diagonal with respect to the decomposition \eqref{eq:formalization}, and each diagonal block induces a morphism between the corresponding regular parts, which is known to be strict with respect to the filtration by the Deligne lattices.
\end{proof}

The behaviour by duality below is proved similarly by reducing to the regular singularity case (\cf \eg \cite[\S III.1.b]{Bibi00} or \cite[Lem\ptbl3.2]{Bibi96bb}).

\begin{lemme}\label{lem:dualDM}
Let $(\cG,\nabla)$ be as above and let $(\cG,\nabla)^\vee$ be the dual meromorphic connection. Then, there are canonical isomorphisms
\tagdroite
\begin{align*}
[\DM^a(\cG,\nabla)]^\vee&\simeq\DM^{>-a-1}[(\cG,\nabla)^\vee],\\
[\DM^{>a}(\cG,\nabla)]^\vee&\simeq\DM^{-a-1}[(\cG,\nabla)^\vee].\tag*{\qed}
\end{align*}
\taggauche
\end{lemme}

We will use this lemma as follows ($\iota^*$, Notation \ref{notation},  will be needed later).

\begin{corollaire}\label{cor:pV}
Let $(\cG,\nabla)$ be of \nrexp, with associated Stokes structure $(\cL,\cL_\bbullet)$. Let $\ccQ_\rB:(\cL,\cL_\bbullet)\otimes_\CC\iota^{-1}(\cL,\cL_\bbullet)\to\CC$ be a nondegenerate bilinear pairing. Let $\ccQ:(\cG,\nabla)\otimes_{\CC\lap\hb\rap}\iota^*(\cG,\nabla)\to(\CC\lap\hb\rap,\rd)$ be the nondegenerate pairing corresponding to $\ccQ_\rB$ via the Riemann-Hilbert correspondence. Then, for each $a\in\RR$, $\ccQ$ extends in a unique way as a nondegenerate pairing $\DM^a(\cG,\nabla)\otimes_{\CC\{\hb\}}\iota^*\DM^{>-a-1}(\cG,\nabla)\to(\CC\{\hb\},\rd)$.\qed 
\end{corollaire}

\begin{corollaire}\label{cor:DMA}
With the assumptions of Corollary \ref{cor:pV}, assume moreover that~$a$ is an integer. Then,
\begin{enumerate}
\item\label{cor:DMA1}
if none of the monodromies of the $\wh\cG_c$ has $1$ as an eigenvalue, then $\DM^a(\cG,\nabla)=\DM^{>a}(\cG,\nabla)$ for each integer $a$, and $\ccQ$ induces a nondegenerate pairing $\DM^a(\cG,\nabla)\otimes_{\CC\{\hb\}}\iota^*\DM^a(\cG,\nabla)\to(\hb^{2a+1}\CC\{\hb\},\rd)$,
\item\label{cor:DMA2}
if none of the monodromies of the $\wh\cG_c$ has $-1$ as an eigenvalue, then $\DM^{a-1/2}(\cG,\nabla)=\DM^{>a-1/2}(\cG,\nabla)$ for each integer $a$, and $\ccQ$ induces a nondegenerate pairing $\DM^{a-1/2}(\cG,\nabla)\otimes_{\CC\{\hb\}}\iota^*\DM^{a-1/2}(\cG,\nabla)\to(\hb^{2a}\CC\{\hb\},\rd)$.\qed
\end{enumerate}
\end{corollaire}

\section{Non-commutative Hodge structures}\label{sec:ncHodge}

In this section, we fix a subfield $\kk$ of $\RR$, \eg $\kk=\QQ$ or $\RR$. The presentation given below owes much to various sources: \cite{Hertling01,Hertling06}, \cite{H-S06,H-S08b}, \cite{K-K-P08}, \cite{Mochizuki07,Mochizuki08b} and~\cite{Bibi01c}.

\paragraph{Non-commutative Hodge structures via linear algebra}\label{subsec:linalg}%\mbox{}\nopagebreak

\num{A reminder on Hodge structures}\label{num:reminder}
Let $H$ be a finite dimensional complex vector space. Recall that a complex Hodge structure of weight $w\in\nobreak\ZZ$ consists in a decomposition $H=\bigoplus_{p\in\ZZ}H^{p,w-p}$. Equivalently, it consists of a semi-simple endomorphism~$\cQ$ of $H$ with half-integral eigenvalues. The eigenspace of $\cQ$ corresponding to the eigenvalue $p-w/2$, $p\in\ZZ$, is $H^{p,w-p}$. The role of the weight only consists in fixing the bigrading.

A real structure is a $\RR$-vector space $H_\RR$ such that $H=\CC\otimes_\RR H_\RR$, with respect to which $H^{w-p,p}=\ov{H^{p,w-p}}$. Then the matrix of $\cQ$ in any basis of $H_\RR$ is purely imaginary.

On the other hand, a polarization of a complex Hodge structure is a nondegenerate $(-1)^w$-Hermitian pairing $k$ on $H$ such that the decomposition is $k$\nobreakdash-orthogonal and such that the Hermitian form $h$ on $H$ defined by $h_{|H^{p,w-p}}=i^{p-(w-p)}k_{|H^{p,w-p}}=i^{-w}(-1)^pk_{|H^{p,w-p}}$ is positive definite, in other words, defining the Weil operator $C$ by $e^{\pi i\cQ}$, $h=k(C\cbbullet,\ov\cbbullet)$. For a real Hodge structure, the real polarization $Q$ is then defined as the real part of~$k$, and it is $(-1)^w$-symmetric.

\num{Complex \ncHodge structures}\label{num:CncH}
By a \textit{complex \ncHodge structure} of weight $w\in\nobreak\ZZ$, we mean the data $(H,\cU,\cU^\dag,\cQ,w)$, where $\cU,\cU^\dag,\cQ$ are endomorphisms of $H$. When~$w$ is fixed, these data form a category, where morphisms are linear morphisms $H\to H'$ commuting with the endomorphisms $\cU,\cU^\dag,\cQ$. For a complex Hodge structure, we have $\cU=\cU^\dag=0$ and $\cQ$ is as above. The category of complex \ncHodge structures of weight $w\in\ZZ$ is abelian.

\begin{exemple*}
Assume $\cU=\cU^\dag=0$ and $\cQ$ is semi-simple. One can decompose $H=\bigoplus_{\lambda\in\CC^*}H_\lambda$, where $H_\lambda$ is the $\lambda$-eigenspace of $e^{-2\pi i\cQ}$. Then each $(H_\lambda,0,0,\cQ,w)$ is a Hodge structure of weight~$w$ and we can regard $(H,0,0,\cQ,w)$ as a Hodge structure of weight~$w$ equipped with a semi-simple automorphism, with eigenvalue $\lambda$ on $H_\lambda$.
\end{exemple*}

In order to understand various operations on complex \ncHodge structures, we associate to $(H,\cU,\cU^\dag,\cQ,w)$ the $\CC[\hb]$-module $\cH=\CC[\hb]\otimes_\CC H$, with the connection
\bgroup\numstareq
\begin{equation}\label{eq:assconn}
\nabla=\rd+\big(\hbm \cU-(\cQ+(w/2)\id)-\hb \cU^\dag\big)\frac{\rd\hb}{\hb}.
\end{equation}
\egroup
This connection has a (possibly irregular) singularity at $\hb=0$ and $\hb=\infty$, and no other singularity. Duality and tensor product are defined in a natural way, according to the rules for connections. Hence
\[
(H,\cU,\cU^\dag,\cQ,w)^\vee=(H^\vee,-{}^t\!\cU,-{}^t\!\cU^\dag,-{}^t\!\cQ,-w),
\]
and $(H_1,\cU_1,\cU_1^\dag,\cQ_1,w_1)\otimes(H_2,\cU_2,\cU_2^\dag,\cQ_2,w_2)$ has weight $w_1+w_2$ and the endomorphisms are defined by formulas like $\cU_1\otimes\id_2+\id_1\otimes \cU_2$. The involution $\iota:\hb\mto-\hb$ induces a functor $\iota^*$, with $\iota^*(H,\cU,\cU^\dag,\cQ,w)=(H,-\cU,-\cU^\dag,\cQ,w)$.

\num{Real \ncHodge structures}
The complex conjugate of the complex \ncHodge structure is defined as
\[
\ov{(H,\cU,\cU^\dag,\cQ,w)}\defin(\ov H,\ov \cU{}^\dag,\ov \cU,-\ov \cQ,w), 
\]
where $\ov H$ is the $\RR$-vector space $H$ together with the conjugate complex structure. A real structure $\kappa$ on $(H,\cU,\cU^\dag,\cQ,w)$ is an isomorphism from it to its conjugate, such that $\ov\kappa\circ\kappa=\id$. A real structure consists therefore in giving a real structure $H_\RR$ on~$H$, with \hbox{respect} to which $\cU^\dag=\ov \cU$ and $\ov \cQ+\cQ=0$. We denote such a structure as $(H_\RR,\cU,\cQ,w)$. Morphisms are $\RR$-linear morphisms compatible with $\cU$ and~$\cQ$. Real \ncHodge structures $(H_\RR,\cU,\cQ,w)$ satisfy properties similar to that of complex \ncHodge structures and we have similar operations defined in a natural way.

\num{Polarization of a complex \ncHodge structure}
A \textit{polarization} of $(H,\cU,\cU^\dag,\cQ,w)$ is a nondegenerate Hermitian form $h$ on $H$ such that
\begin{itemize}
\item
$h$ is positive definite,
\item
$\cU^\dag$ is the $h$-adjoint of $\cU$ and $\cQ$ is self-adjoint with respect to $h$.
\end{itemize}
It is useful here to introduce the complex Tate object $\TT_\CC(\ell)$ defined as $(\CC,0,0,0,-2\ell)$ for $\ell\in\ZZ$, corresponding to the Hodge structure $\CC^{-\ell,-\ell}$. The Tate twist by $\TT_\CC(\ell)$ is simply denoted by $(\ell)$. The last condition is equivalent to asking that $h$ defines an isomorphism
\[
(H,\cU,\cU^\dag,\cQ,w)\isom\iota^*\ov{(H,\cU,\cU^\dag,\cQ,w)}{}^\vee(-w).
\]
The tensor product
\[
(H_1,\cU_1,\cU_1^\dag,\cQ_1,h_1,w_1)\otimes(H_2,\cU_2,\cU_2^\dag,\cQ_2,h_2,w_2)
\]
of polarized complex \ncHodge structures is defined by the supplementary relation $h=h_1\otimes h_2$, and is also polarized.

\num{Polarization of a real \ncHodge structure and the Betti structure}\label{num:BettincHodge}%
Although the notion of a real \ncHodge structure seems to be defined over~$\RR$, the real vector space~$H_\RR$ does not contain all the possible ``real'' information on the structure, in cases more general than that of a Hodge structure. The Weil operator is not defined in this setting. The formula $C=e^{\pi i\cQ}$ exhibits the Weil operator as a square root of the monodromy of the connection $d-\cQ \rd\hb/\hb$. This suggests that the monodromy of the connection \eqref{eq:assconn} should be taken into account in order to properly define the notion of a real \ncHodge structure, and further, that of a \nckHodge structure. Even further, if $\nabla$ has an irregular singularity, the Betti real structure is encoded in the Stokes data attached to the connection, not only in the monodromy, as explained in \S\ref{num:Bettik}, together with the notion of a $\kk$-Betti structure, for any subfield~$\kk$ of~$\RR$ (\eg $\kk=\QQ$).

Here is another drawback of the presentation of a complex \ncHodge structure as a vector space with endomorphisms: the notion of a variation of such objects is not defined in a holomorphic way, exactly as $H^{p,w-p}$ do not vary holomorphically in classical Hodge theory. Good variations are characterized by the property of the Hermitian metric to be harmonic in the sense of \cite{Simpson92}, and the endomorphisms $\cU,\cQ$ satisfy relations encoded in the notion of a \textit{CV structure} \cite{Hertling01}.

The correct generalization of the Hodge filtration is that of a vector bundle on the complex affine line together with a meromorphic connection. This motivates the definition of a \ncHodge structure in \S\ref{subsec:ncHodge} by taking integrable twistor structures (\ie vector bundles on $U_0$ with a meromorphic connection having a pole of order two at zero and no other pole, plus gluing data with the ``twistor conjugate'' $\gamma^*\ov\cH$ or ``twistor adjoint'' $\sigma^*\ov\cH{}^\vee$ object) as the starting point.

\paragraph{Non-commutative Hodge structures via integrable twistor structures}\label{subsec:ncHodge}%\mbox{}\nopagebreak
\num{Non-commutative $\kk$-Hodge structures}\label{num:nckHodge}
Let $(\cH,\nabla)$ be a connection with a pole of order two of rank $d$, and let $\cL$ be the corresponding local system on~$\bbS$. A real structure $\cL_\RR$ on $\cL$ allows one to produce a holomorphic vector bundle with connection $(\wt\cH,\wt\nabla)$ on $\PP^1$, by gluing $(\cH,\nabla)$ (chart~$U_0$) with $\gamma^*\ov{(\cH,\nabla)}$ (chart $U_\infty$, \cf Notation \ref{notation}) through the flat gluing isomorphism $g:(\cH,\nabla)_{|U_0\cap U_\infty}\isom\gamma^*\ov{(\cH,\nabla)}_{|U_0\cap U_\infty}$ uniquely defined as follows (\cf \cite{Hertling01}):
\begin{itemize}
\item
$g$ is uniquely defined from $g^\nabla:\cH^\nabla_{|U_0\cap U_\infty}\isom\gamma^{-1}\ov\cH{}^\nabla_{|U_0\cap U_\infty}$,
\item
$g^\nabla$ is uniquely defined from its restriction $g^\nabla_{|\bbS}:\cL\to\ov\cL$ to $\bbS$ (recall that $\gamma_{|\bbS}=\id$),
\item
$g^\nabla_{|\bbS}$ is defined to be the isomorphism induced by the real structure on $\cL$.
\end{itemize}

We say that $(\cH,\nabla,\cL_\RR)$ is \textit{pure of weight~$w$} if the bundle $\wt\cH$ is isomorphic to $\cO_{\PP^1}(w)^d$. Notice that, for $k\in\ZZ$ and for any $\lambda\in\CC$, $(\hb^k\cH,\nabla,\lambda\cL_\RR)$ is then pure of weight $w-2k$.

\begin{definition*}[{\cf \cite[Rem\ptbl2.13]{Hertling01}, \cite{K-K-P08}}]
Let $(\cH,\nabla,(\cL_\kk,\cL_{\kk,\bbullet}))$ be a connection with a pole of order two and $\kk$-Betti structure. We say that it is a \textit{pure \nckHodge structure of weight~$w$} if the underlying triple $(\cH,\nabla,\cL_\RR)$, with $\cL_\RR\defin\RR\otimes_\kk\cL_\kk$, is pure of weight~$w$ (if $\kk=\RR$  and forgetting the Stokes filtration $\cL_{\kk,\bbullet}$, we recover the notion of TER structure of \cite[Rem\ptbl2.13]{Hertling01}).
\end{definition*}

\begin{exemple*}
Let $H_\kk$ be a $\kk$-vector space and set $H=\CC\otimes_\kk H_\kk$. A~$\kk$\nobreakdash-Hodge structure of weight~$w$ consists of the data $(H_\kk,F^\cbbullet H)$ such that the filtration $F^\cbbullet H$ and its conjugate are~$w$-opposed. Set $(\cH,\nabla)=(R_FH,\rd)\defin(\bigoplus_pF^pH\hb^{-p},\rd)$ where $\rd$ is the standard differential on $H\otimes_\CC\CC[\hb,\hbm]$, that we restrict to the $\CC[\hb]$-submodule $\bigoplus_{p\in\ZZ}F^pH\hb^{-p}$ (we work here with the algebraic version of $\cH$). The local system $\cL_\RR$ is the constant local system~$H_\RR$ and $\gamma^*\ov{(\cH,\nabla)}=\bigoplus_q\ov F{}^q\hb^{\prime-q}$. The condition that $(\cH,\nabla,\cL_\RR)$ is pure of weight~$w$ is equivalent to the~$w$-opposedness property. Indeed, let us show one direction for instance. If both filtrations $F^\cbbullet$ and~$\ov F{}^\cbbullet$ are~$w$-opposed, we have a bigrading $H=\bigoplus_pH^{p,w-p}$ with $H^{q,p}=\ov H{}^{p,q}$ and~$F^\cbbullet$ defined as usual. Then we have decompositions into finite sums
\begin{align*}
R_FH&=\tbigoplus_p(H^{p,w-p}\otimes\hb^{-p}\CC[\hb]),\\
\gamma^*\ov{R_FH}&=\tbigoplus_q(\ov H{}^{q,w-q}\otimes\hb^{\prime-q}\CC[\hb'])=\tbigoplus_p(H^{p,w-p}\otimes\hb^{\prime p-w}\CC[\hb']), 
\end{align*}
and the gluing morphism is the identity $\CC[\hb,\hbm]\otimes_{\CC[\hb]}R_FH=H[\hb,\hbm]=H[\hb',\hb^{\prime-1}]=\CC[\hb',\hb^{\prime-1}]\otimes_{\CC[\hb']}\gamma^*\ov{R_FH}$. For each $p$, the line bundle defined by the gluing data $(\hb^{-p}\CC[\hb],\hb^{\prime p-w}\CC[\hb'],\id)$ is nothing but $\cO_{\PP^1}(w)$.

\end{exemple*}

\begin{exemple*}[The Tate object $\TT_\QQ(\ell)$ with $\ell\in\hZZ$]
For non-commutative Hodge structures, we can take the opportunity of having monodromy $\neq\id$ to define the Tate \nckHodge structure $\TT_\kk(\ell)$ for $\ell\in\hZZ$. We set
\bgroup\numstareq
\begin{equation}\label{eq:Tatedemi}
\TT_\kk(\ell)=(\cO_{U_0},\rd+\ell \rd\hb/\hb,(2\pi i/\hb)^\ell\kk_{\bbS}),
\end{equation}
\egroup
where $(2\pi i/\hb)^\ell\kk_{\bbS}$ denotes the rank-one local system on $\bbS$ generated by the (possibly) multivalued function $(2\pi i/\hb)^\ell$. It has monodromy equal to $(-1)^{2\ell}\id$. By the residue theorem, $\TT_\QQ(\ell)$ has weight $-2\ell$.

If $\ell\in\ZZ$, then we have an isomorphism
\bgroup\numstarstareq
\begin{equation}\label{eq:Tate}
\TT_\kk(\ell)\To{\hb^\ell}(\hb^\ell\cO_{U_0},\rd,(2\pi i)^\ell\kk_\bbS),
\end{equation}
\egroup
where we regard $\hb^\ell\cO_{U_0}$ as included in $\cO_{U_0}(1/\hb)$, with the induced differential $\rd$, and the latter object corresponds to the object constructed in the previous example for the Tate Hodge structure $\kk(\ell)$, with $H_\kk=(2\pi i)^\ell\kk$ and $H=H^{-\ell,-\ell}$.
\end{exemple*}

\num{Polarized complex \ncHodge structures}\label{num:polncHS}
Let now $(\cH,\nabla)$ be a connection with a pole of order two equipped with a nondegenerate $\iota$\nobreakdash-Hermitian pairing $\ccC_\bbS^\nabla:\cL\otimes_\CC\iota^{-1}\ov\cL\to\CC_\bbS$. Recalling that $\gamma_{|\bbS}=\id$, we regard $\ccC_\bbS^\nabla$ as a nondegenerate $\sigma$-Hermitian pairing $\cL\otimes\sigma^{-1}\ov\cL\to\nobreak\CC_\bbS$, and we extend it in a unique way as a nondegenerate pairing $\ccC^\nabla:\cH^\nabla_{|U_0\cap U_\infty}\otimes\nobreak\sigma^{-1}\ov\cH{}^\nabla_{|U_0\cap U_\infty}\to\CC_{U_0\cap U_\infty}$. This pairing in turn defines in a unique way a flat isomorphism $\ccC:\sigma^*\ov{(\cH,\nabla)}_{|U_0\cap U_\infty}\isom(\cH,\nabla)^\vee_{|U_0\cap U_\infty}$. By gluing with this isomorphism the dual bundle $\cH^\vee$ (chart $U_0$) with the $\sigma$-conjugate bundle $\sigma^*\ov{(\cH,\nabla)}$ (chart $U_\infty$), we obtain a bundle $\wh\cH$ on $\PP^1$, which has degree zero by the residue formula. Since $\ccC$ is $\sigma$-Hermitian, we obtain a natural morphism $\cS:\wh\cH\to\sigma^*\ov{\wh\cH}{}^\vee$ compatible with the connections (it is induced by $\id$ on each chart) and which is $\sigma$-Hermitian. We say that $(\cH,\nabla,\ccC)$ is \textit{a pure complex \ncHodge structure (of weight $0$)} if $\wh\cH$ is the trivial bundle on~$\PP^1$.

If $(\cH,\nabla,\ccC)$ is pure of weight $0$, the isomorphism $\cS$ induces an isomorphism on global sections. Let us set $\ov H=\Gamma(\PP^1,\wh\cH)$. Identifying in a natural way $\Gamma(\PP^1,\sigma^*\ov{\wh\cH})$ with $\Gamma(\PP^1,\ov{\wh\cH})=\nobreak H$, the isomorphism $h\defin\Gamma(\PP^1,\cS):\ov H\to\nobreak H^\vee$ is a nondegenerate Hermitian pairing $h:H\otimes\ov H\to\CC$. We then say that $(\cH,\nabla,\ccC)$ is \textit{a complex \ncHodge structure, pure (of weight~$0$) and polarized} if $h$ is positive definite.

\begin{remarque*}
The following criterion can be used for the purity and polarizability: $(\cH,\nabla,\ccC)$ is pure (of weight $0$) and polarized if and only if there exists a $\cO_{U_0}$-basis~$\epsilong$ of $\cH$ such that the matrix $\ccC(\epsilong,\sigma^*\ov\epsilong)$ is the identity (\cf \cite[Rem\ptbl2.2.3]{Bibi01c}). In this way, one checks that if $(\cH,\nabla,\ccC)$ is pure (of weight $0$) and polarized, then so is $(\hb^k\cH,\nabla,(-1)^k\ccC)$ for every $k\in\ZZ$.
\end{remarque*}

\begin{exemple*}
Let $H=\bigoplus_{p\in\ZZ}H^{p,w-p}$ be a grading of $H$, and let~$h$ be a positive definite Hermitian pairing on $H$ such that the decomposition is $h$-orthogonal. Let $k$ be the $(-1)^w$-Hermitian pairing on $H$ such that the decomposition is $k$-orthogonal and $h(\cbbullet,\ov\cbbullet)=k(C\cbbullet,\ov\cbbullet)$, where $C$ is the standard Weil operator $i^{p-q}\id$ on $H^{p,q}$. Define the Hodge filtration $F^\cbbullet H$ as usual, and $(\cH,\nabla)$ as in Example of \S\ref{num:nckHodge}. In the algebraic setting, $\cH_{|U_0\cap U_\infty}=H\otimes_\CC\CC[\hb,\hbm]$. Let $\ccC$ be defined from $\ccC^\nabla_\bbS\defin i^{-w}k$ by sesquilinearity, that~is,
\[
\ccC(\textstyle\sum_pv^p\hb^{-p},\sigma^*\ov{\sum_qw^q\hb^{-q}})\defin\sum_{p,q}i^{-w}k(v^p,\ov w{}^q)\hb^{-p}(-1/\hb)^{-q}\in\CC[\hb,\hbm].
\]
For $v^p\in H^{p,w-p}$ and $w^q\in H^{q,w-q}$, we have $\ccC(v^p\hb^{-p},\sigma^*\ov{w^q\hb^{-q}})=0$ unless $p=q$, in which case it is equal to $h(v^p,w^p)$, showing that a $h$-orthonormal basis of $\bigoplus H^{p,w-p}$ induces a  basis of $\cH$ which is $\ccC$-orthonormal in the sense of the previous remark, so $(\cH,\nabla,\ccC)$ is pure (of~weight $0$) and polarized.
\end{exemple*}

\num{Polarized \nckHodge structures}\label{num:polnckHodge}
For any integer~$w$, we regard $\hb^{-w}\cO_{U_0}$ as contained in $\cO_{U_0}[\hbm]$ and we consider on it the connection induced by $\rd$, that we still denote by~$\rd$. The corresponding local system is the constant sheaf $\CC$ on $U_0\cap U_\infty$, and we endow it with the usual $\QQ$-structure (hence $\kk$-structure). The Stokes filtration is the trivial one.

We now consider a connection with a pole of order two $(\cH,\nabla)$ with $\kk$-Betti structure $(\cL_\kk,\cL_{\kk,\bbullet})$ together with a nondegenerate $(-1)^w$-$\iota$\nobreakdash-sym\-metric pairing
\bgroup\numstareq
\begin{multline}\label{eq:polnckHodge*}
(\ccQ,\ccQ_\rB):((\cH,\nabla),(\cL_\kk,\cL_{\kk,\bbullet}))\otimes(\iota^*(\cH,\nabla),\iota^{-1}(\cL_\kk,\cL_{\kk,\bbullet}))\\
\to((\hb^{-w}\cO_{U_0},\rd),\kk_\bbS).
\end{multline}
\egroup
Here we mean that $\ccQ_\rB$ induces, by the Riemann-Hilbert correspondence, a nondegenerate $(-1)^w$-$\iota$\nobreakdash-symmetric pairing $\ccQ$ on $(\cG,\nabla)$ with values in $\cO_{U_0}(*0)$, whose restriction to $\cH$ takes values in $\hb^{-w}\cO_{U_0}$ and is non-degenerate as such (and automatically $(-1)^w$-$\iota$\nobreakdash-symmetric). We will write~$\ccQ_\rB$ instead of $(\ccQ,\ccQ_\rB)$ when this causes no confusion.

On the one hand, $(\cH,\nabla,\cL_\RR)$ defines a vector bundle $\wt\cH$. On the other hand,~$\ccQ_\rB$ induces a $(-1)^w$-$\iota$\nobreakdash-symmetric pairing $\ccQ^\nabla_\bbS:\cL_\RR\otimes_\CC\iota^{-1}\cL_\RR\to\RR$, and thus $\ccP^\nabla_\bbS\defin i^{-w}\ccQ_\bbS^\nabla$, made sesquilinear according to the isomorphism $\kappa:\cL\to\ov\cL$ induced by the real structure, defines a $\iota$\nobreakdash-Hermitian nondegenerate pairing $\ccC^\nabla_\bbS:\cL\otimes_\CC\iota^{-1}\ov\cL\to\CC$, with
\bgroup\numstarstareq
\begin{equation}\label{eq:polnckHodge**}
\ccC^\nabla_\bbS(\cbbullet,\iota^{-1}\ov\cbbullet)\defin\ccP_\bbS^\nabla(\cbbullet,\iota^{-1}\kappa(\cbbullet))=i^{-w}\ccQ_\bbS^\nabla(\cbbullet,\iota^{-1}\kappa(\cbbullet)).
\end{equation}
\egroup
Extending $\ccC^\nabla_\bbS$ as a flat $\sigma$-Hermitian pairing $\ccC$ on $\cH_{|U_0\cap U_\infty}$, we define as above a vector bundle $\wh\cH$ on $\PP^1$.

\begin{remarques*}\mbox{}
\begin{itemize}
\item
The data $(\cH,\nabla,\cL_\RR,\ccP)$ as above (forgetting the Stokes structure, if any, and setting $\kk=\RR$) is called a TERP$(-w)$-structure in~\cite{Hertling01}.
\item
Let us set $\ccS=(\hb/2\pi i)^w\ccQ$. Then we can regard $\ccS$ as a morphism of \nckHodge structures with values in $\TT_\kk(-w)$.
\end{itemize}
\end{remarques*}

\begin{lemme*}[{\cf\cite[Th\ptbl2.19]{Hertling01}}]
The pairing $\ccQ$ induces an isomorphism
\[
\wt\cH\isom\iota^*\wh\cH\otimes\cO_{\PP^1}(w).
\]
\end{lemme*}

\begin{proof}
We have an isomorphism between the bundle $\wt\cH$ defined by the gluing data $(\cH_{U_0},(\gamma^*\ov\cH)_{U_\infty},\kappa:\cH_{|U_0\cap U_\infty}\isom\gamma^*\ov\cH_{|U_0\cap U_\infty})$ and the bundle defined by the gluing data
\[
(\iota^*\cH_{U_0}^\vee\otimes\hb^{-w}\cO_{U_0},(\iota^*\sigma^*\ov\cH)_{U_\infty},\iota^*\ccC),
\]
which is given by $\ccP$ on $U_0$ and $\id$ on $U_\infty$ (since $\gamma=\sigma\circ\iota$). On the other hand, $\iota^*\wh\cH$ is defined by the gluing data $(\iota^*\cH^\vee_{U_0},(\iota^*\sigma^*\ov\cH)_{U_\infty},\iota^*\ccC)$, hence the assertion.
\end{proof}

\begin{definition*}
A connection $(\cH,\nabla)$ with a pole of order two equipped with a $\kk$-Betti structure $(\cL_\kk,\cL_{\kk,\bbullet})$ and a nondegenerate $(-1)^w$\nobreakdash-$\iota$\nobreakdash-sym\-metric $\kk$-pairing $\ccQ_\rB$ as in \eqref{eq:polnckHodge*} is called a \textit{pure and polarized \nckHodge structure of weight~$w$} if the associated triple $(\cH,\nabla,\ccC)$ is a pure complex \ncHodge structure (of~weight~$0$) which is polarized (\cf \S\ref{num:polncHS}).
\end{definition*}

A consequence of the lemma above is that, if $((\cH,\nabla),(\cL_\kk,\cL_{\kk,\bbullet}),\ccQ_\rB)$ is a pure and polarized \nckHodge structure of weight~$w$, then it is a pure \nckHodge structure of weight~$w$ (as defined in \S\ref{num:nckHodge}). Notice also that $((\hb^k\cH,\nabla),(\cL_\kk,\cL_{\kk,\bbullet}),\ccQ_\rB)$ is then pure and polarized of weight $w-2k$.

\begin{exemple*}
Let $(H_\kk,F^\cbbullet H)$ be a pure $\kk$-Hodge structure of weight~$w$ and let $Q_\rB:H_\kk\otimes H_\kk\to\kk$ be a polarization, in particular a $(-1)^w$-symmetric nondegenerate pairing. Recall also that $Q(H^{p,q},H^{p',q'})=0$ for $p'\neq w-p$, so $Q(F^pH\hb^{-p},F^qH\hb^{-q})\subset\hb^{-w}\CC[\hb]$. Let $H_{\kk,\bbS}$ be the constant local system on $\bbS$. We have a canonical identification $H_{\kk,\bbS}=\iota^{-1}H_{\kk,\bbS}$. Then $Q_\rB$ induces a $(-1)^w$-$\iota$\nobreakdash-symmetric nondegenerate pairing $\ccQ_\rB$ on $H_{\kk,\bbS}$ in a natural way. The associated pairing $\ccQ:R_FH\otimes_{\CC[\hb]}\nobreak\iota^*R_FH\to(\hb^{-w}\CC[\hb],d,\kk)$ is the pairing generated over $\CC[\hb]\otimes\iota^*\CC[\hb]$ by $Q$. Then $(\cH,\nabla,H_{\kk,\bbS},\ccQ)$ is a pure and polarized \nckHodge structure of weight~$w$.

Indeed, for each $p$, let $\epsilong_p$ be a basis of $H^{p,w-p}$ which is orthonormal for $Q(C\cbbullet,\ov\cbbullet)$, where $C$ is the Weil operator. Recall that $R_FH=\bigoplus_pH^{p,w-p}\hb^{-p}\CC[\hb]$. The family $(\epsilong_p\hb^{-p})_p$ is a $\CC[\hb]$-basis of $R_FH$. We have, by definition and because $\sigma^*\ov\hb=-1/\hb$,
\begin{align*}
\ccC(\epsilong_p\hb^{-p},\sigma^*\ov{\epsilong_q\hb^{-q}})&=i^{-w}Q(\epsilong_p,\ov\epsilong_q)\hb^{-p}\sigma^*\ov\hb^{-q}\\
&=\begin{cases}
0&\text{if }p\neq q,\\
Q(C\epsilong_p,\ov\epsilong_p)&\text{if }p=q,
\end{cases}
\end{align*}
so the family $(\epsilong_p\hb^{-p})_p$ is an orthonormal basis for $\ccC$, hence the polarization property, according to the remark in \S\ref{num:polncHS}.
\end{exemple*}

\begin{exemple*}[Polarization of $\TT_\kk(\ell)$ for $\ell\in\ZZ$, \cf\cite{Mochizuki08b}]
We will use the expression \eqref{eq:Tate} and the previous example. We have a natural isomorphism $\iota^*\TT_\kk(\ell)=\TT_\kk(\ell)$ and a natural nondegenerate symmetric pairing
\[
\ccS:\TT_\kk(\ell)\otimes\TT_\kk(\ell)\to\TT_\kk(2\ell)
\]
induced by the natural product pairing $\kk_\bbS\otimes\kk_\bbS\to\kk_\bbS$. With this in mind, $\ccQ_\rB:(2\pi i)^{\ell}\kk_\bbS\otimes(2\pi i)^{\ell}\kk_\bbS\to\kk_\bbS$ is defined as $(2\pi i)^{-2\ell}$ times the product. For $\ell\in\hZZ$, the polarization will be indicated in Remark \ref{rem:Tatepol} below.
\end{exemple*}

\num{Duality and tensor products}\label{num:dualtens}
Duality and tensor product are well defined both for connections with a pole of order two and for $\kk$-Stokes-filtered local systems, and are compatible via the Riemann-Hilbert correspondence (\cf \cite{Malgrange91}, \cf also \cite{Bibi10}). These functors preserve the category of \nckHodge structures, since they correspond to the corresponding functors for the bundles $(\wt\cH,\wt\nabla)$, and thus transform weights as expected. Similarly, polarization behaves in a natural way. For instance, for $\ell\in\ZZ$, the Tate twist
\[
((\cH,\nabla),(\cL_\kk,\cL_{\kk,\bbullet}),\ccQ_\rB)(\ell)\defin((\cH,\nabla),(\cL_\kk,\cL_{\kk,\bbullet}),\ccQ_\rB)\otimes\TT_\kk(\ell)
\]
consists in replacing $(\cH,\nabla)$ with $(\hb^\ell\cH,\nabla)$, $(\cL_\kk,\cL_{\kk,\bbullet})$ with $(2\pi i)^\ell(\cL_\kk,\cL_{\kk,\bbullet})$ and~$\ccQ_\rB$ with $(2\pi i)^{-2\ell}\ccQ_\rB$.

\num{Comparison with the linear algebra approach of \S\ref{subsec:linalg}}\label{subsec:complinalg}
We now forget about the Stokes structure. Tensoring $(\cH,\nabla,\cL_\kk)$ with $\TT_\kk(\ell)$ ($\ell\in\nobreak\hZZ$) allows one to only treat \nckHodge structures of weight~$0$. A similar reduction can be done for the case of complex \ncHodge structures, as well as for that of polarized \nckHodge structures (by using Remark \ref{rem:Tatepol} below). In the linear algebra approach of \S\ref{subsec:linalg}, the corresponding reduction simply amounts to setting $w=0$ in the objects. We will also assume that $\kk=\RR$, since only this case can occur in \S\ref{subsec:linalg}. 

\begin{lemme*}[{\cf\cite[Th\ptbl2.19]{Hertling01}}]
The functor which associates to $(\cH,\nabla,\cL_\RR)$ pure of weight~$0$ (\resp to a pure complex \ncHodge structure $(\cH,\nabla,\ccC)$ of weight $0$, \resp to a pure polarized TERP structure $(\cH,\nabla,\cL_\RR,\ccP)$ of weight $0$) the object $(\Gamma(\PP^1,\wt\cH),\wt\nabla)$ (\resp $(\Gamma(\PP^1,\wh\cH),\wh\nabla)$, \resp...) is an equivalence with the corresponding linear algebra data $(H,\nabla)$ with a connection \eqref{eq:assconn}.\qed
\end{lemme*}

\num{Rescaling}\label{num:rescaling}
C\ptbl Hertling \cite[Th\ptbl7.20]{Hertling01} has considered the action of $\CC^*$ on the category of connections  $(\cH,\nabla)$ with a pole of order two obtained by rescaling the variable $\hb$. For $x\in\CC^*$, consider the map $\mu_x:U_0\to U_0$ defined by $\mu_x(\hb)=x\hb$. The rescaled connection is $\mu_x^*(\cH,\nabla)$. Since $\mu_x^{-1}(\bbS)=\{\hb\mid |x\hb|=1\}\neq\bbS$ if $|x|\neq1$, we define the pull-back local system $\mu_x^{-1}\cL$ by working with local systems on $U_0\cap U_\infty$ (recall that the inclusion $\bbS\hto U_0\cap U_\infty$ induces an equivalence of categories of local systems on the corresponding spaces). This approach is taken up by C\ptbl Hertling and Ch\ptbl Sevenheck in \cite{H-S06}.

The rescaling acts on the category of objects $(\cH,\nabla,\cL_\RR)$ (by the same procedure as above), on the category of objects $(\cH,\nabla,\ccC)$ since $\iota$ commutes with $\mu_x$, and similarly on the category of objects $(\cH,\nabla,\cL_\kk,\ccQ_\rB)$ as in \S\ref{sec:ncHodge} (without paying attention to the \ncHodge property at the moment). It~also acts on Stokes-filtered local systems $(\cL_\kk,\cL_{\kk,\bbullet})$ in a way compatible, by the Riemann-Hilbert correspondence, to the action on the meromorphic bundles $(\cG,\nabla)$: the subsheaf $(\mu_x^{-1}\cL)_{\kk,\leq c}$ is defined as $\mu_x^{-1}(\cL_{\kk,\leq c/x})$.

If $(\cH,\nabla,\cL_\RR)$ is pure of weight~$w$ (\resp if $(\cH,\nabla,\ccC)$ is pure of weight $0$ and polarized, \resp if $(\cH,\nabla,\cL_\kk,\ccQ_\rB)$ is pure of weight~$w$ and polarized) then, \textit{provided $|x-1|$ is small enough}, the corresponding rescaled object remains pure (\resp pure and polarized) of the same weight: this follows from the rigidity of trivial bundles on $\PP^1$. On the other hand, this may not remain true for all values of the rescaling parameter~$x$.

The subcategory of pure polarized \ncHodge structures which remain so by rescaling by any $x\in\CC^*$ is a global analogue of a nilpotent orbit in the theory of variation of polarized Hodge structures. It has been extensively studied in \cite{H-S06, H-S08b} and \cite{Mochizuki08b}. On the other hand, on this subcategory, the \textit{no ramification} condition for the connection is a consequence of an expected nice behaviour at the limiting values $x=0,\infty$ (this is discussed in \cite[App\ptbl B]{Bibi08}). This would lead to the definition of the category of \textit{rescalable} polarizable pure \ncHodge structures, which would be particular cases of variations of polarizable pure \ncHodge structures on $\CC^*$ with a wild (\resp tame) behaviour at~$x=0$ (\resp $x=\infty$).

\num{Exponential Hodge structures}\label{num:expHS}
In a similar spirit, Kontsevich and Soibelman \cite{K-S10} develop the notion of \textit{exponential mixed Hodge structure}. Given a holonomic $\Clt$-module~$M$, the $\Clt$-module $\CC[t]\langle\partial_t,\partial_t^{-1}\rangle\otimes_{\Clt}M$ is still holonomic and its Laplace transform is equal to the localization $\CC[\tau,\tau^{-1}]\otimes_{\CC[\tau]}\Fou M$. If $M$ is regular holonomic, the corresponding operation on its de~Rham complex, via the Riemann-Hilbert correspondence, is the convolution with  $j_!\QQ_{\Afu\moins\{0\}}$, where $j:\Afu\moins\{0\}\hto\Afu$ denotes the inclusion, as defined in \cite{Katz96}. The convolution with $j_!\QQ_{\Afu\moins\{0\}}$ can be done within the frame of mixed Hodge modules defined in \cite{MSaito87}. Then, given a pure Hodge module on $\Afu$, its convolution with $j_!\QQ_{\Afu\moins\{0\}}$ is mixed, but is declared to be ``exponentially pure'' of the same weight. Theorem \ref{th:FLTQ} below and the remark which follows imply that the Fourier-Laplace transform of an exponential mixed Hodge structure (\ie a mixed Hodge $\cD$-module on $\Afu$ whose de~Rham hypercohomology spaces are zero) is the localization at $\tau=0$ of a rescalable polarized pure \ncHodge structure.

\section{Non-commutative Hodge structures from Deligne-Malgrange lattices}\label{sec:DM}

\num{The basic construction}\label{num:basicconst}
Let $C\subset\CC$ be a finite set, let $\theta_o\in\RR/2\pi\ZZ$ be generic with respect to~$C$ (\cf \S\ref{num:BettiStokesdata}) defining thus a numbering $\{c_1,\dots,c_n\}$ of the set~$C$ in strictly increasing order, and let $\Sigma$ be a \textit{block-lower triangular invertible square matrix} of size $d$ with entries in $\kk\subset\RR$, the blocks being indexed by $C$ ordered by~$\theta_o$. Under some assumptions on $\Sigma$, we will associate to these data and to each integer~$w$ a connection with a pole of order two $(\cH,\nabla)$ with $\kk$-Betti structure $(\cL_\kk,\cL_{\kk,\bbullet})$ and a nondegenerate $(-1)^w$\nobreakdash-$\iota$\nobreakdash-symmetric nondegenerate $\kk$-pairing $\ccQ$ as in \S\ref{num:polnckHodge}, giving rise in particular to a TERP$(-w)$-structure. We will denote these data by $\textup{ncH}(C,\theta_o,\Sigma,w)$.

The matrix $\Sigma$ determines Stokes data $((L_{c,1},L_{c,2}),S,S')$ of type $(C,\theta_o)$ (\cf\eqref{eq:catStokesdata}) by setting $L_1=L_2=\kk^d$, and $L_{c,j}$ ($c\in C$, $j=1,2$) correspond to the blocks of $\Sigma$, which defines a linear morphism $S:L_1\to L_2$, and we define $S'$ as the linear morphism attached to $\Sigma'\defin(-1)^w\cdot{}^t\Sigma$. These Stokes data in turn correspond to a Stokes filtered local system $(\cL_\kk,\cL_{\kk,\bbullet})$. The underlying local system $\cL_\kk$ is completely determined by the $\kk$-vector space $L_1=\kk^d$ together with (the conjugacy class of) its monodromy, whose matrix is $(-1)^w\Sigma^{-1}\cdot{}^t\Sigma$. On the other hand, each diagonal block~$\Sigma_{c_i}$ of~$\Sigma$ gives rise to an invertible matrix $(-1)^w\Sigma_{c_i}^{-1}\cdot{}^t\Sigma_{c_i}$, which represents the monodromy of the meromorphic connection corresponding to $\wh\cG_{c_i}$ in the decomposition \eqref{eq:formalization}.

A nondegenerate $\iota$\nobreakdash-pairing $\ccQ_\rB$ on $(\cL_\kk,\cL_{\kk,\bbullet})$ with values in $\kk$ is determined by a pair of nondegenerate pairings $\ccQ_{12}:L_{1,\kk}\otimes L_{2,\kk}\to\kk$ and $\ccQ_{21}:L_{2,\kk}\otimes L_{1,\kk}\to\kk$ which satisfy $\ccQ_{21}(x_2,x_1)=\ccQ_{12}(S^{-1}x_2,S'x_1)$, and the $(-1)^w$-$\iota$\nobreakdash-symmetry amounts to $\ccQ_{21}(x_2,x_1)=(-1)^w\ccQ_{12}(x_1,x_2)$ (\cf\cite[(3.3)\,\&\,(3.4)]{H-S09}). In the fixed bases of~$L_1$ and~$L_2$, we define\footnote{\label{footnote1}Compared with the definition of $h_{1\ov2}$ and $h_{2\ov1}$ in \cite[\S3c\,\&\,3.e]{H-S09}, which is the case where~$w$ is odd, we changed the sign in order to have a better correspondence with the $\kk$\nobreakdash-structure and the example of \S\ref{num:polnckHodge}.} $\ccQ_{21}(x_2,x_1)={}^tx_2\cdot x_1$, so that $\ccQ_{12}(x_1,x_2)={}^tx_1{}^t\Sigma \cdot\Sigma^{\prime-1}x_2$; the $(-1)^w$-symmetry follows from $\Sigma'=(-1)^w\cdot{}^t\Sigma$. From the Riemann-Hilbert correspondence we finally obtain a nondegenerate $(-1)^w$-$\iota$\nobreakdash-symmetric pairing
\bgroup\numstareq
\begin{equation}\label{eq:QRH}
\ccQ_\rB:((\cG,\nabla),(\cL_\kk,\cL_{\kk,\bbullet}))\otimes\iota^*((\cG,\nabla),(\cL_\kk,\cL_{\kk,\bbullet}))
\to(\CC\lap\hb\rap,d,\kk_{\bbS}). 
\end{equation}
\egroup

We will set (using the notation of \S\ref{subsec:DML}, and stressing upon the fact that the construction of $(\cG,\nabla)$ and $(\cL,\cL_\bbullet)$ above depends on the parity of $w$):
\bgroup\numstarstareq
\begin{equation}\label{eq:ncH}
\textup{ncH}(C,\theta_o,\Sigma,w)=(\DM^{-(w+1)/2}(\cG,\nabla),(\cL_\kk,\cL_{\kk,\bbullet}),\ccQ_\rB).
\end{equation}
\egroup
Let us note that Corollary \ref{cor:DMA} reads as follows:

\begin{corollaire}\label{cor:dual}
Let $(C,\theta_o,\Sigma,w)$ be as above. Assume that $\ker(\Sigma_{c_i}+{}^t\Sigma_{c_i})=0$ for all~$i$. Then $\DM^{>-(w+1)/2}=\DM^{-(w+1)/2}$ and the pairing $\ccQ_\rB$ given by \eqref{eq:QRH} induces a nondegenerate $(-1)^w$-$\iota$\nobreakdash-symmetric pairing, also denoted by $\ccQ_\rB$:
\[
\DM^{-(w+1)/2}(\cG,\nabla)\otimes_{\CC\{\hb\}}\iota^*\DM^{-(w+1)/2}(\cG,\nabla)\to(\hb^{-w}\CC\{\hb\},\rd).
\]
\end{corollaire}

\begin{theoreme}[\cf \cite{H-S09}]\label{th:HS}
Let $(C,\theta_o,\Sigma,w)$ be as in \S\ref{num:basicconst}. We moreover assume the following:
\begin{enumerate}
\item\label{th:HS1}
for each $c\!\in\! C$, the diagonal block $\Sigma_c$ of $\Sigma$ satisfies \hbox{$\ker(\Sigma_c+{}^t\Sigma_c)=0$},
\item\label{th:HS2}
the quadratic form $\Sigma+{}^t\Sigma$ is positive semi-definite.
\end{enumerate}
Then $\textup{ncH}(C,\theta_o,\Sigma,w)$ is a pure and polarized \nckHodge structure of weight~$w$.
\end{theoreme}

\begin{proof}
The condition \ref{th:HS}\eqref{th:HS1} implies that the assumption of Corollary \ref{cor:dual} holds. The case where $w=-1$ is contained in the statement of \cite[Th\ptbl5.9]{H-S09}, where $\ccQ_\rB$ above corresponds to $-\varh_\rB$ there (\cf Footnote \eqref{footnote1}), and where the assumption \ref{th:HS}\eqref{th:HS1} implies $K_c=0$ for each $c\in C$ in \cite[(5.9)($**$)]{H-S09}. The proof of \cite[Th\ptbl5.9]{H-S09} consists in expressing $\textup{ncH}(C,\theta_o,\Sigma,-1)$ as the Fourier-Laplace transform of a variation of complex polarized Hodge structure of type $(0,0)$ on $\CC\moins C$, and to use Theorem \ref{th:FLT} explained in \S\ref{sec:FLT} below.

In order to treat the general case, we remark that (\cf \eqref{eq:ncH}):
\[
\textup{ncH}(C,\theta_o,\Sigma,w-k)=\textup{ncH}(C,\theta_o,\Sigma,w)\otimes\textup{ncH}(C=\{0\},\theta_o=0,\Sigma=1,w=-k),
\]
for each $k\in\ZZ$. According to \S\ref{num:dualtens}, it is thus enough to prove that  the rank-one object $\textup{ncH}(C=\{0\},\theta_o=0,\Sigma=1,w=-k)$ is pure and polarized of weight $-k$ and, by iterating tensor products, it is enough to consider $k=1$.

Let us compute the polarization $\ccQ^{(1/2)}$ for the object $(\cO_{U_0},\rd+(1/2)\rd\hb/\hb,\hb^{-1/2}\kk_\bbS)$, which is seen to correspond to $C=\{0\}$, $\theta_o=0$, $w=-1$. Then $L_1=\kk\cdot e_1$ is the space of sections of $\cL_\kk$ on $(0,\pi)$ and $L_2=\kk\cdot e_2$ on $(-\pi,0)$. Moreover, $\ccQ^{(1/2)}_{21}(e_2,e_1)=1$ and $\ccQ^{(1/2)}_{12}(e_1,e_2)=-1$. The sections $\exp(i\theta/2)e_1\in\cO_{U_0|(0,\pi)}$ and $\exp(i\theta/2)e_2\in\cO_{U_0|(-\pi,0)}$ are the restriction of the $\cO_{U_0}$-basis $v_o=1\in\cO_{U_0}$: indeed, $v_o$ restricted to $U_0\moins\{\hb\in\RR\}$ is written~$\hb^{1/2}e_i$, $i=1,2$. We have $\ccP^{(1/2),\nabla}=i\ccQ^{(1/2)}$ and $\ccC^{(1/2)}$ is the $\sigma$-sesquilinear extension of~$\ccP^{(1/2)}$.

The map $\hb\mto\sigma(\ov\hb)=-1/\hb$ restricts to $\RR/2\pi\ZZ$ as
\begin{align*}
(0,\pi)&\to(0,\pi)&(-\pi,0)&\to(-\pi,0)\\
\theta&\mto-(\theta-\pi)&\theta&\mto-(\theta+\pi)
\end{align*}
We then have
\[
\ccC^{(1/2)}_{|\bbS}(v_o,\sigma^*\ov v_o)=
\begin{cases}
e^{i\theta/2}\cdot e^{-i(\theta-\pi)/2}\cdot i\cdot \ccQ^{(1/2)}_{12}(e_1,e_2)=1&\text{for }\theta\in(0,\pi),\\
e^{i\theta/2}\cdot e^{-i(\theta+\pi)/2}\cdot i\cdot \ccQ^{(1/2)}_{21}(e_2,e_1)=1&\text{for }\theta\in(-\pi,0).
\end{cases}
\]
This shows that $(v_o,\sigma^*\ov v_o)$ is an orthonormal basis for $\ccC^{(1/2)}$, hence (\cf the remark in \S\ref{num:polncHS}) the corresponding $(\cO_{U_0},\rd+\frac12\rd\hb/\hb,\ccC^{(1/2)})$ is pure of weight~$0$ and polarized.
\end{proof}

\begin{remarque}\label{rem:Tatepol}
By rotating the $\QQ$-structure, the last argument can be used similarly to show that the Tate pure \nckHodge structure $\TT_\kk(\ell)$ of weight $-2\ell$ is polarized, and make explicit its polarization.
\end{remarque}

\section{Non-commutative $\kk$-Hodge structures by Fourier-Laplace~transformation}\label{sec:FLT}

\num{}
Let $C\subset\Afu$ be a finite set of points on the complex affine line with coordinate~$t$. Let $(\ccV_\kk,F^\cbbullet V, \nabla, Q_\rB)$ be a variation of polarized Hodge structure of weight $w\in\ZZ$ on $X\defin \Afu\moins C$. Namely,
\begin{itemize}
\item
$(V,\nabla)$ is a holomorphic vector bundle with connection on~$X$,
\item
$F^\cbbullet V$ is a finite decreasing filtration of $V$ by holomorphic sub-bundles satisfying the Griffiths transversality property: $\nabla F^pV\subset F^{p-1}V\otimes_{\cO_X}\Omega^1_X$,
\item
$\ccV_\kk$ is a $\kk$-local system on $X$ with $\ccV_\kk\otimes_\kk\CC=V^\nabla$,
\item
$Q_\rB:\ccV_\kk\otimes_\kk\ccV_\kk\to\kk$ is a nondegenerate $(-1)^w$-symmetric pairing,
\end{itemize}
all these data being such that the restriction at each $x\in X$ is a polarized Hodge structure of weight~$w$ (\cf \eg\cite{P-S08} or the example in \S\ref{num:polnckHodge}). We denote by $Q$ the nondegenerate pairing $(V,\nabla)\otimes(V,\nabla)\to\cO_X$ that we get from $Q_\rB$ through the canonical isomorphism $\cO_X\otimes_\kk\ccV_\kk=V$. The associated nondegenerate sesquilinear pairing is denoted by $k:(V,\nabla)\otimes_\CC\ov{(V,\nabla)}\to\nobreak\cC_X^\infty$, which is obtained from $k_\rB:\ccV\otimes_\CC\ov\ccV\to\CC$ similarly. It is $(-1)^w$-Hermitian and $i^{-w}k$ induces a flat Hermitian pairing on the $C^\infty$\nobreakdash-bundle $(\cC_X^\infty\otimes_{\cO_X}V,\nabla+\ov\partial)$. We can regard $(V,\nabla, F^\cbbullet V,i^{-w}k)$ as a variation of polarized complex Hodge structure, pure of weight~$0$.

\num{The middle (or minimal) extension}\label{num:minimalext}
This procedure transforms the previous data, defined on $\Afu\moins C$, into similar data defined on $\Afu$. Let $j:X\hto\Afu$ denote the inclusion.

\textit{Betti side}: The pairing $Q_\rB$ (\resp $k_B$) extends in a unique way as a nondegenerate $(-1)^w$-symmetric (\resp $(-1)^w$-Hermitian) pairing $j_*Q_\rB:j_*\ccV_\kk\otimes_\kk\nobreak j_*\ccV_\kk\to\nobreak\kk_{\Afu}$ (\resp $j_*k_B:j_*\ccV_\CC\otimes_\CC\nobreak j_*\ccV_\CC\to\nobreak\CC_{\Afu}$).

\textit{De~Rham side}:  The bundle $(V,\nabla)$ can be extended in a unique way as a free $\cO_{\PP^1}(*C\cup\{\infty\rap$-module with a connection $\nabla$ having a regular singularity at $C\cup\nobreak\{\infty\}$ (Deligne meromorphic extension). Taking global sections on~$\PP^1$ produces a left module~$\wt M$ on the Weyl algebra $\Clt$. The minimal extension (along $C$) of $\wt M$ is the unique submodule $M$ of $\wt M$ which coincides with $\wt M$ after tensoring both by $\CC(t)$, and which has no quotient submodule supported in $C$. The pairing $k$ extends first (due to the regularity of the connection) as a pairing $\wt k:\wt M\otimes_\CC\ov{\wt M}\to\cS'(\Afu\moins C)$, where $\cS'(\Afu)$ denotes the Schwartz space of temperate distributions on $\Afu=\RR^2$, and $\cS'(\Afu\moins C)\defin\CC\big[t,\prod_{c\in C}(t-c)^{-1}\big]\otimes_{\CC[t]}\cS'(\Afu)$. Then one shows that, when restricted to $M\otimes_\CC\ov M$, $\wt k$ takes values in $\cS'(\Afu)$, and we denote it by~$k$ (\cf\cite[\S1.a]{H-S09}, where no distinction is made between $h$ and~$k$ since we only deal there with variations of Hodge structures of type $(p,p)$ for some $p$).

\textit{Hodge side}: The Hodge filtration $F^\cbbullet V$ extends, according to a procedure due to M\ptbl Saito \cite[\S3.2]{MSaito86}, to a good filtration $F^\cbbullet M$ of $M$ as a $\Clt$-module (\cf \cite[\S3.d]{Bibi05}).

\num{Fourier-Laplace transformation}
The Fourier transformation $\rF_t:\cS'(\Afu_t)\to\cS'(\Afu_{\hb'})$ with kernel $\exp(\ov{t\hb'}-t\hb')\itwopi\,dt\wedge d\ov t$ is an isomorphism from the Schwartz space $\cS'(\Afu_t)$ considered as a $\Clt\otimes_\CC\CC[\ov t]\langle\partial_{\ov t}\rangle$-module, to $\cS'(\Afu_{\hb'})$ considered as a $\CC[\hb']\langle\partial_{\hb'}\rangle\otimes_\CC\CC[\ov\hb']\langle\partial_{\ov\hb'}\rangle$-module.

Composing $k$ with $\rF_t$ defines a sesquilinear pairing $\Fou k:\Fou M\otimes_\CC\iota^+\ov{\Fou M}\to\cS'(\Afu_{\hb'})$, where $\Fou M$ is the Laplace transform of $M$ as in \S\ref{num:Laplace}, and where~$\iota^+$ denotes the pull-back by $\iota$ in the sense of $\CC[\hb']\langle\partial_{\hb'}\rangle$-modules (or $\CC[\hb']$-modules with connection). See \cite[\S1.a]{Bibi05} for the need of $\iota^+$.

Restricting to $\CC^*$ produces a sesquilinear pairing $\Fou k:(\cG,\nabla)\otimes\nobreak\iota^*\ov{(\cG,\nabla)}\to(\cC^\infty_{\CC^*},\rd)$, whose horizontal part restricted to $\bbS$ defines a pairing $\ccC^\nabla_\bbS:\cL\otimes\nobreak\iota^{-1}\ov\cL\to\CC_\bbS$ (we use here the notation of \S\ref{num:Laplace}). We then define~$\ccC$ as in \S\ref{num:polncHS}.

The pairing $\Fou k$ restricts to horizontal sections of $(\cG,\nabla)$ to produce a Betti $\iota$\nobreakdash-sesquilinear pairing $(\Fou k)_\rB$ on $\cL$. It is defined only over $\CC^*$. On the other hand, in a way similar to Proposition \ref{prop:whQ}, there is a topological Fourier-Laplace transform $\wh{j_*k_\rB}$, which is compatible with the Stokes filtration. The comparison between both is given by:

\begin{lemme}[{\cf \cite[Prop\ptbl1.18]{Bibi05} \& \cite[Appendix]{H-S09}}]\label{lem:compkk}
Over $\CC^*$ we have $(\Fou k)_\rB=\itwopi\wh{j_*k_\rB}$.\qed
\end{lemme}

Lastly, we denote by $\cH\subset\cG$ the Brieskorn lattice of the good filtration $F^\cbbullet M$ (\cf\S\ref{num:minextBr}), and we recall that the connection has a pole of order two at most on $\cH$.

\begin{theoreme}[{\cite[Cor\ptbl3.15]{Bibi05}}]\label{th:FLT}
Let $(\ccV_\kk,F^\cbbullet V, \nabla, Q_\rB)$ be a variation of polarized Hodge structure of weight $w\in\ZZ$ on $X\defin \Afu\moins C$. Then $(\cH,\nabla,i^{-w}\ccC)$ defined as above is a pure polarized complex \ncHodge structure of weight~$0$.\qed
\end{theoreme}

We now make more precise Theorem \ref{th:FLT}, which only produces a polarized complex \ncHodge structure, in order to get a polarized \nckHodge structure. Recall that the pairing $\wh{j_*Q_\rB}$ has been considered in Proposition \ref{prop:whQ}. We notice that the topological Laplace transform $\wh{j_*k_\rB}$ is nothing but the $\iota$\nobreakdash-sesquilinear pairing associated with the $\iota$\nobreakdash-pairing $\wh{j_*Q_\rB}$ on $\cL_\kk$.

\begin{theoreme}\label{th:FLTQ}
Let $(\ccV_\kk,F^\cbbullet V, \nabla, Q_\rB)$ be a variation of polarized $\kk$-Hodge structure of weight $w\in\ZZ$ on $X\defin \Afu\moins C$. Then $((\cH,\nabla),(\cL_\kk,\cL_{\kk,\bbullet}),-\wh{j_*Q_\rB})$ is a pure polarized \nckHodge structure of weight~$w+1$.
\end{theoreme}

\begin{remarque*}
One can show (see \cite[Rem\ptbl2.5]{Bibi05}) that the pure polarized \nckHodge structures that one gets by Fourier-Laplace transformation are rescalable, in the sense given in \S\ref{num:rescaling}.
\end{remarque*}

\begin{remarque*}
In order to go from Theorem \ref{th:FLT} to Theorem \ref{th:FLTQ}, we need some more work on \textit{bilinear} pairings (while Lemma \ref{lem:compkk} and Theorem \ref{th:FLT} only use \textit{sesquilinear} pairings at the topological or analytical level). The pair $(\ccQ,\ccQ_\rB)$ of \eqref{eq:polnckHodge*} that we wish to use consists of $\ccQ_\rB=-\wh{j_*Q_\rB}$ and of a constant multiple of the Laplace transform of the algebraic duality isomorphism of the $\Clt$-module $M$ associated with $(V,\nabla)$ as in \S\ref{num:minimalext}. We first check that this algebraic duality is compatible with filtrations (Lemma \ref{lem:AppA}), hence satisfies the holomorphic part of \eqref{eq:polnckHodge*}, by using properties of Hodge $\cD$-modules (Corollary \ref{cor:FouQ}). We then show that the pairing corresponding to $-\wh{j_*Q_\rB}$ by the Riemann-Hilbert correspondence essentially coincides with this algebraic pairing (Lemma \ref{lem:AppB}). Moreover, notice that Lemma \ref{lem:compkk} only holds over $\CC^*$, while Lemma \ref{lem:AppB} holds including at $\hb=0$. In particular, Lemma \ref{lem:AppB} and Lemma \ref{lem:compkk} are not of the same nature and are independent one from the other.
\end{remarque*}

\begin{proof}[\proofname\ of Theorem \ref{th:FLTQ}]
Let $(\ccV_\kk,F^\cbbullet V, \nabla, Q_\rB)$ be a variation of polarized Hodge structure of weight~$w$ on $X$. According to \cite[Th\ptbl2]{MSaito86}, it corresponds to a polarized Hodge module which is pure of weight $w+1$. It extends in a unique way as a polarized Hodge module on $\Afu$, with underlying filtered $\Clt$-module $(M,F^\cbbullet M)$ as considered in \S\ref{num:minimalext} above. The polarization induces a $(-1)^{w+1}$-symmetric isomorphism $Q:(M,F^\cbbullet M)\isom\bD(M,F^\cbbullet M)(-(w+\nobreak1))$ (\cf \loccit for the duality of filtered $\cD$-modules, and Lemma 5.2.12 there for the polarization, which is denoted by $S'$ and corresponds to $(2\pi i)^{-w}Q$), and where, for a filtered module $(N,F^\cbbullet N)$, we set $(N,F^\cbbullet N)(k)=(N,F^{\cbbullet+k} N)$. It is easy to check, from the very definition of the Brieskorn lattice of a good filtration (\cf \cite[\S1.d]{Bibi05}), that the Brieskorn lattice of $(N,F^\cbbullet N)(k)$ is equal to $\hb^k\cH$, where~$\cH$ is the Brieskorn lattice of $(N,F^\cbbullet N)$.

On the other hand, according to the relation between duality and Laplace transforms of $\Clt$-modules (\cf \cite[Lem\ptbl3.6,\,p\ptbl86]{Malgrange91}, see also \cite[\S V.2.b]{Bibi00}), we have $\Fou(D M)=\iota^+D\Fou M$ (where $i^+$ denotes the pull-back of $\CC[\hb']\langle\partial_{\hb'}\rangle$-modules and $D$ is the duality of holonomic $\Clt$-modules), and thus, according to \cite[\S2.7]{MSaito89} (\cf also \cite[Lem\ptbl3.8]{Bibi96bb}), the localized Laplace transform of $D M$ is identified with $\iota^*\cG^\vee$ (\cf Lemma \ref{lem:dualDM} for the notation). 

Given a holonomic $\Clt$-module with a good filtration $(M,F^\cbbullet M)$, we say that $\bD(M,F^\cbbullet M)$ is strict (\cf \cite{MSaito86}) if the dual complex $\bD R_FM$ (as $R_F\Clt$-modules, \cf \S\ref{app:BL}) has cohomology in degree one only, without $\CC[\hb]$-torsion. This cohomology can then be written in a unique way as $R_FDM$ for some good filtration $F^\bbullet DM$.

For a polarizable Hodge module, $\bD(M,F^\cbbullet M)$ is strict (\cf \cite{MSaito86}), and the polarization gives a filtered isomorphism between $M$ and $DM$ as above.

The proof of the following lemma will be sketched in Appendix \ref{app:proofFLTQ}.

\begin{lemme}\label{lem:AppA}
Assume that $\bD(M,F^\cbbullet M)$ is strict. Then, through the previous identification of the localized Laplace transform of $D M$ with $\iota^*\cG^\vee$, the Brieskorn lattice of $(DM,F^\cbbullet DM)$ is identified with $\iota^*\cH^\vee$.
\end{lemme}

\begin{corollaire}\label{cor:FouQ}
The morphism $\Fou Q$ induces an isomorphism $(\cG,\nabla)\isom\iota^*(\cG,\nabla)^\vee$ which sends $\cH$ onto $\hb^{-(w+1)}\iota^*\cH^\vee$, and thus induces a nondegenerate $(-1)^{w+1}$-$\iota$\nobreakdash-symmetric pairing $\ccQ$ on $\cH$.\qed
\end{corollaire}

Let us now denote by $\wh{j_*Q}$ the nondegenerate pairing $(\cG,\nabla)\isom\iota^*(\cG,\nabla)^\vee$ induced by $\wh{j_*Q_\rB}$ through the inverse RH correspondence.

The proof of the following lemma will be sketched in Appendix \ref{app:B}.
\begin{lemme}\label{lem:AppB}
The pairings $\Fou Q$ and $\wh{j_*Q}$ coincide up to a multiplicative constant.
\end{lemme}

Let us set $\ccQ_\rB=-\wh{j_*Q_\rB}$. Then, according to Lemma \ref{lem:AppB} and Corollary \ref{cor:FouQ}, $(\ccQ,\ccQ_\rB)$ satisfies \eqref{eq:polnckHodge*} with $w+1$ instead of~$w$. For the polarizability property, let us set $\ccC_\bbS^\nabla\defin i^{-(w+1)}\ccQ_\bbS^\nabla$ made sesquilinear. We thus have $\ccC_\bbS^\nabla=-i^{-(w+1)}\wh{j_*k_B}=2\pi\cdot i^{-w}(\Fou k)_\rB$, according to Lemma \ref{lem:compkk}. Hence Theorem \ref{th:FLT} gives the polarizability.
\end{proof}

\section{The {\let\kk\QQ\protect\nckHodge} structure attached to a tame function}\label{sec:tamefunct}

\num{}
Let $X$ be a complex smooth quasi-projective variety and let $f:X\to\Afu$ be a regular function on it, that we regard as a morphism to the affine line~$\Afu$ with coordinate~$t$. For each $k\in\ZZ$, the  perverse cohomology sheaf $\pcH^k(\bR f_*\QQ_X)$ underlies a mixed Hodge module (\cf\cite{MSaito87}). The fibre at $\hb=\nobreak1$ of its topological Laplace transform (\cf \cite[Chap\ptbl VI,\,\S2]{Malgrange91}) is the $k$-th \textit{exponential cohomology space of $X$ with respect to $f$} (or simply of $(X,f)$).

The \textit{exponential periods} attached to $f$ are the integrals $\int_\gamma e^{-f}\omega$, where~$\omega$ is an algebraic differential form of degree $k$ on $X$ and~$\gamma$ is a $k$-cycle in the Borel-Moore homology of $X$, such that $\reel f >\epsilon>0$ on the support of~$\gamma$ and away from a compact set in $X$.

Formulas like $\int_\RR e^{-x^2}dx=\sqrt\pi$ suggest (\cf \cite[p\ptbl118]{Deligne8406}) to produce a ``Hodge filtration'' with rational or real indices (here $1/2$) on the exponential cohomology of $(X,f)$. This is developed in \loccit for particular examples.

On the other hand, when $f$ is proper, $\pcH^k(\bR f_*\QQ_X)\star j_!\QQ_{\Afu\moins\{0\}}$ is \textit{exponentially pure} in the sense of Kontsevich-Soibelman \cite{K-S10} (\cf\S\ref{num:expHS}), but so is also the case when all the graded object, except one, with respect to the weight filtration are constant, hence killed by the convolution operation. The cohomologically tame case considered below enters this frame.

The non-commutative Hodge structure approach that we explain below consists, when $\pcH^k(\bR f_*\QQ_X)\star j_!\QQ_{\Afu\moins\{0\}}$ is \textit{exponentially pure}, in considering the $k$-th exponential cohomology space of $(X,f)$ as $\Gamma(\PP^1,\wh\cH)$, where $\wh\cH$ is defined in \S\ref{num:polncHS}. In particular, the non-commutative Hodge structure is defined on~$\cH$. The relation with the construction of Deligne quoted above is explained in \cite[\S6]{Bibi08}.

\num{}
For the sake of simplicity, we will only consider the case of a cohomologically tame function $f:U\to\Afu$ on a smooth affine complex manifold $U$, for which there is only one non-zero exponential cohomology space. We will constantly refer to \cite{Bibi96b,Bibi96bb} and \cite{Bibi05}.

Recall (\cf\loccit) that cohomological tameness implies that there exists a diagram
\[
\xymatrix{
U\ar@{^{ (}->}[r]^-\kappa\ar[dr]_(.35)f&X\ar[d]^F\\
&\Afu
}
\]
where $X$ is quasi-projective and $F$ is projective, such that the cone of natural morphism $\kappa_!\QQ_U\to\bR\kappa_*\QQ_U$ has no vanishing cycle with respect to $F-c$ for any $c\in\CC$ (\cf also \cite[Th\ptbl14.13.3]{Katz90}).

We will use the perverse shift convention by setting $\pQQ_U=\QQ[\dim U]$. By Poincaré-Verdier duality, we have a natural pairing
\[
Q_\rB:\bR f_!\pQQ_U\otimes_\QQ\bR f_*\pQQ_U\to\QQ_{\Afu}[2].
\]
Considering the $\QQ$-perverse sheaf $\cF=\pcH^0(\bR f_*\pQQ_U)$, we therefore get a morphism $\DD\cF\to\cF$, whose kernel and cokernel (in the perverse sense) are constant sheaves up to a shift. Let $(\cL_\QQ,\cL_{\QQ,\bbullet})$ be the Stokes-filtered local system on $\bbS$ deduced from the topological Laplace transform of $\cF$ (\cf \S\ref{num:LaplaceBetti}). According to Lemma \ref{prop:whQ}, it comes equipped with a nondegenerate pairing
\let\kk\QQ
\[
\ccQ_B\defin-\wh{j_*Q_\rB}:(\cL_\kk,\cL_{\kk,\bbullet})\otimes\iota^{-1}(\cL_\kk,\cL_{\kk,\bbullet})\to\kk_\bbS.
\]

On the other hand, let $G_0$ denote the Brieskorn lattice of $f$. By definition,
\[
G_0=\Omega^{\dim U}(U)[\hb]\big/(\hb\rd-\rd f\wedge)\Omega^{\dim U-1}(U)[\hb],
\]
and set $G=\CC[\hb,\hbm]\otimes_{\CC[\hb]}G_0$, with the action of $\nabla_{\partial_\hb}$ induced by $\partial_\hb+\nobreak f/\hb^2=e^{f/\hb}\circ\partial_\hb\circ e^{-f/\hb}$ on $\Omega^{\dim U}(U)[\hb]$. We also set $G_k=\hb^{-k}G_0$. For $\ell\in\ZZ$ we set $\epsilon(\ell)=(-1)^{\ell(\ell-1)/2}$.

\begin{theoreme}
The data $((G_{\dim U},\nabla),(\cL_\QQ,\cL_{\QQ,\bbullet}),\epsilon(\dim U-1)\ccQ_B)$ is a polarized \nckHodge structure which is pure of weight $\dim U$.
\end{theoreme}

\begin{proof}[Sketch of proof]
We refer to \cite[Proof of Th\ptbl4.10]{Bibi05}. We first replace the perverse sheaf~$\cF$ defined above with $\cF_{!*}\defin\pcH^0(\bR F_*\kappa_{!*}\pQQ_U)$, which generically is the local system of intersection cohomology of the \hbox{fibres} of~$F$, and we have a corresponding Poincaré-Verdier duality pairing $Q_{\rB,!*}$, whose topological Laplace transform $-\wh{j_*Q_{\rB,!*}}$ coincides with $\ccQ_\rB$. By applying M\ptbl Saito's results on polarizable Hodge $\cD$-modules, together with Theorem \ref{th:FLTQ}, we find that $((G^\rH_0,\nabla),(\cL_\QQ,\cL_{\QQ,\bbullet}),\epsilon(\dim U-\nobreak1)\ccQ_B)$ is a pure polarized \nckHodge structure of weight $\dim U$, where $G_0^\rH$ is the Brieskorn lattice of the Hodge filtration of the Hodge module corresponding to $\cF_{!*}$. By \cite[Lem\ptbl4.7]{Bibi05}, taking also into account the shift between the standard filtration and M\ptbl Saito's Hodge filtration, we have $G_0^\rH=G_{\dim U}$.
\end{proof}

\begin{corollaire}
The data
\[
((G_0,\nabla),(\cL_\QQ,\cL_{\QQ,\bbullet}),\epsilon(\dim U-1)\ccQ_\rB)
\]
is a pure polarized \nckHodge structure of weight $-\dim U$, and the corresponding~$\ccP$  can be written as $i^{-\dim U}\epsilon(\dim U)\wh{j_*Q_\rB}$. \qed
\end{corollaire}

\section{Numerical invariants of \ncHodge structures}

\num{Spectrum at $\hb=\infty$}
To any germ $(\cH,\nabla)$ consisting of a free $\CC\{\hb\}$-module of finite rank equipped with a meromorphic connection (with pole of arbitrary order at $\hb=0$) for which the eigenvalues of the monodromy have absolute value equal to one, is attached a numerical invariant called its spectrum at infinity (\cf\cite[\S III.2.b]{Bibi00} or \cite[\S1.a]{Bibi08}), and encoded as a polynomial $\Sp^\infty_\cH(T)=\prod_\gamma(T-\gamma)^{\nu_\gamma}$, where $\gamma$ varies in $\RR$ (and more precisely, $e^{2\pi i\gamma}$ is an eigenvalue of the monodromy), and $\nu_\gamma\in\NN$. Its behaviour by duality is described in \cite[Prop\ptbl III.2.7]{Bibi00}, and the behaviour with respect to tensor product is better described in terms of the divisor of the polynomial $\Sp^\infty_\cH(T)$, an element of the ring $\ZZ[\RR]$ which is sometimes written as $\sum \nu_\gamma u^\gamma$. Then this divisor behaves in a multiplicative way with respect to tensor product, provided that both terms of the tensor product are essentially self-dual (\cf\cite[Ex\ptbl III.2.9]{Bibi00}).

\num{Spectrum at $\hb=0$}
Assume now that $(\cH,\nabla)$ is of \nrexp, so has a formal decomposition \eqref{eq:formalizationH}. Assume also that the eigenvalues of the monodromy of each $\cH_i$ have absolute value equal to one. Then each $(\cH_i,\nabla_i)$ has a spectrum at the origin, defined with the help of the $V$-filtration (\cf\S\ref{subsec:Brieskornregular}) at $\hb=0$. We use the convention of \cite[Def\ptbl1.7]{Bibi08}. The product of the spectral polynomials $\Sp_{\cH_i}^0(T)$ for all~$i$ is denoted $\Sp_{\cH}^0(T)$. It has properties similar to that of $\Sp^\infty_\cH(T)$ with respect to various operations (\cf\cite[\S III.1.c]{Bibi00} and \cite[1.b]{Bibi08}).

\num{The ``new supersymmetric index'' (\cf \cite{Hertling01})}
Let $\cT=(\cH,\nabla,\ccC)$ be a pure complex \ncHodge structure of weight $0$ (\cf\S\ref{num:polncHS}) and let $\cQ$ be the ``new supersymmetric index'' associated to it through the correspondence of \S\ref{subsec:complinalg}. Its characteristic polynomial will be denoted by $\Susy_\cT(T)$. If $(\cH,\nabla,\ccC)$ is polarized, then $\cQ$ is self-adjoint with respect to the corresponding positive definite Hermitian
form $h$, hence is semi-simple with real eigenvalues, so the roots of $\Susy_\cT(T)$ are real. If $((\cH,\nabla),(\cL_\kk,\cL_{\kk,\bbullet}),\ccQ_\rB)$ is a polarized \ncHodge structure which is pure of weight~$w$, then the ``new supersymmetric index'' of the corresponding pure complex \ncHodge structure of weight $0$ is purely imaginary, hence its (real) eigenvalues are symmetric with respect to the origin.

\begin{exemple*}[\cf {\cite[Lemma 5.4]{Bibi08}}]
For a polarized Hodge structure of weight~$w$ and Hodge numbers $h^{p,w-p}$, we have $\Susy_\cT(T)=\prod_p(T-\nobreak p+\nobreak w/2)^{h^{p,w-p}}$. On the other hand, $\Sp_\cH^\infty(T)=\Sp_\cH^0(T)=\prod_p(T-p)^{h^{p,w-p}}$. 
\end{exemple*}

\nums{Rescaling}
This relationship between $\Sp^0,\Sp^\infty$ and $\Susy$ can be generalized by considering the action of the rescaling (\cf\S\ref{num:rescaling}), when the \ncHodge structure is stable by rescaling, and has a good limiting behaviour, as in the case of a Fourier-Laplace transform of a variation of Hodge structure. More precisely, one gets:
\begin{theoreme}[\cf{\cite[Th\ptbl7.1]{Bibi08}}]
Let $((\cH,\nabla),(\cL_\kk,\cL_{\kk,\bbullet}),\ccQ_\rB)$ be the \nckHodge structure obtained by Fourier-Laplace transformation from a variation of polarized Hodge structure of weight~$w$ on $\Afu\moins C$ (\cf\S\ref{sec:FLT}) and let $\cT$ be the associated polarized complex \ncHodge structure of weight~$0$. Then
\begin{align*}
\Sp^0_\cH(T)&=\lim_{x\to0}\Susy_{\mu_x^*\cT}(T-w/2),\\
\Sp^\infty_\cH(T)&=\lim_{x\to\infty}\Susy_{\mu_x^*\cT}(T-w/2).
\end{align*}
\end{theoreme}

\num{Limit theorems}
The previous theorem is proved by showing that the rescaled objects $\mu_x^*((\cH,\nabla),(\cL_\kk,\cL_{\kk,\bbullet}),\ccQ_\rB)$ form a variation of \nckHodge structures parametrized by $x\in\CC^*$, and $\mu_x^*\cT$ extends as a pure polarized wild twistor $\cD$\nobreakdash-module (\cf\cite{Bibi06b}) on the projective completion $\PP^1\supset\CC^*$ (wildness only occurs at $x=0$). One proves limit theorems in such a setting (\cf \cite{Bibi08}), showing that the limiting twistor structure, when $x\to\infty$, is a mixed Hodge structure polarized by a nilpotent endomorphism, giving the first equality, and, when $x\to0$, this limiting structure decomposes as the direct sum of exponentially twisted mixed Hodge structures polarized by a nilpotent endomorphism, giving the second equality.

A more general approach to these limit theorems, more in the spirit of Schmid's nilpotent orbit theorem, but in both tame and wild cases, and with many parameters, has been obtained by T\ptbl Mochizuki in \cite{Mochizuki08b}, proving thereby a conjecture of C\ptbl Hertling and Ch\ptbl Sevenheck \cite[Conj\ptbl9.2]{H-S06} on nilpotent orbits of pure polarized TERP structures.

\setcounter{section}{1}
\setcounter{equation}{0}
\renewcommand{\thesection}{\Alph{section}}
\renewcommand{\theparagraph}{\Alph{paragraph}}

\section*{Appendix}
\paragraph{Proof of Lemma \ref{lem:AppA}}\label{app:proofFLTQ}

We first make more precise the notion of Brieskorn lattice of a filtered $\Clt$-module, in order to manipulate it more easily. We mainly refer to \cite[\S2.d]{Bibi05}.

\num{The Brieskorn lattice}\label{app:BL}
Let $(M,F_\bbullet M)$ be a holonomic $\Clt$-module equipped with a good filtration (here we use the increasing version of a filtration, in order to be compatible with \cite[\S2.d]{Bibi05}; recall the standard convention $F_p=F^{-p}$ relating increasing and decreasing filtrations). The Rees module $R_FM=\bigoplus_\ell F_\ell M\hb^\ell$ is a module of finite type over the Rees ring $R_F\Clt$, that we identify with the ring $\Clthb$ by setting $\partiall_t=\hb\partial_t$. The Laplace transform $\Fou(R_FM)$ is the $\CC[\hb]$-module $R_FM$ equipped with the structure of a $\Cltauhb$-module, where~$\tau$ acts as $\partiall_t$ and $\partiall_\tau$ as $-t$. It is also of finite type. Moreover, $R_FM$ is also equipped with an action of $\hb^2\partial_\hb$ (\ie is a $\Cltdhb$-module), which is the natural one on $R_FM$, defined as $\hb^2\partial_\hb(m_\ell\otimes\hb^\ell)=\ell m_\ell\otimes\hb^{\ell+1}$. The action of $\hb^2\partial_\hb$ on $\Fou(R_FM)$ (\ie its $\Cltaudhb$-structure) is twisted  as $\hb^2\partial_\hb\Fou(m_\ell\otimes\hb^\ell)=(\partial_tt+\nobreak\ell)m_\ell\otimes\nobreak\hb^{\ell+1}$ (\cf \cite[Rem\ptbl2.2]{Bibi05}). The Brieskorn lattice $G_0^{(F_\bbullet)}$ of $(M,F_\bbullet M)$ is the restriction to $\tau=1$ of $\Fou(R_FM)$, with the induced $\CC[\hb]\langle\hb^2\partial_\hb\rangle$ structure.

Moreover, let $\epsilon:\Cltauhb\to\Cltauhb$ denote the involution $\tau\mto-\tau$, $\partiall_\tau\mto-\partiall_\tau$. The restriction at $\tau=1$ of $\epsilon^*\Fou(R_FM)$ is equal to the restriction at $\tau=-1$ of $\Fou(R_FM)$, and the formulas given in \cite[Lem\ptbl2.1\,\&\,Rem\ptbl2.2]{Bibi05} identify it with $\iota^*G_0^{(F_\bbullet)}$.

\num{Duality and Laplace transformation}
Exactly as in the case of $\Clt$-modules (\cf \cite[Lem\ptbl V.3.6]{Malgrange91}, see also \cite[\S V.2.b]{Bibi00}), the relation \hbox{between} duality and Laplace transformation of $R_F\Clt$-modules is given by $\bD\Fou(R_FM)=\epsilon^*\Fou(\bD R_FM)$. Since $\bD(M,F_\bbullet M)$ is strict, \ie $\bD R_FM$ has cohomology in degree one only, without $\CC[\hb]$-torsion, then $\Fou(\bD R_FM)$ is also strict, hence so is $\bD\Fou(R_FM)$. Moreover, the unique cohomology of $\Fou(\bD R_FM)$ is $\Fou(R_FDM)$, and we denote by $D\Fou(R_FM)$ the unique cohomology of $\bD\Fou(R_FM)$, so $D\Fou(R_FM)=\epsilon^*\Fou(R_FDM)$. Let us now localize with respect to $\tau$ as in \cite[Lem\ptbl2.1]{Bibi05}, \ie apply $\CC[\tau,\tau^{-1},\hb]\otimes_{\CC[\tau,\hb]}$ to both terms, which then become $\CC[\tau,\tau^{-1},\hb]$-free of finite type. After \loccit, the right-hand term is identified with $\CC[\tau,\tau^{-1}]\otimes\iota^*\cH'$, where $\cH'$ denotes the Brieskorn lattice of $(DM,F_\bbullet DM)$. On the other hand, arguing as in \cite[\S2.7]{MSaito89} (\cf also \cite[Lem\ptbl3.8]{Bibi96bb}), the left-hand term is identified with the dual module $(\CC[\tau,\tau^{-1},\hb]\otimes_{\CC[\tau,\hb]}R_FM)^\vee$ with its natural connection. Restricting to $\tau=1$ gives $\cH^\vee$. Therefore, $\iota^*\cH'=\cH^\vee$, as wanted.\qed

\paragraph{Sketch of proof of Lemma \ref{lem:AppB}}\label{app:B}
This kind of comparison result goes back at least to the notion of higher residue pairings, due to K\ptbl Saito, in the theory of singularities of complex hypersurfaces. One finds in \cite[\S2.7]{MSaito89} a similar result, proved by using a universal unfolding of a holomorphic function having an isolated singularity. Another geometric approach, in the present setting, has been proposed by C\ptbl Hertling (unpublished notes) following the geometric construction, due to F\ptbl Pham, of the intersection form on Lefschetz thimbles (\cf \cite[Th\ptbl10.28]{Hertling00} for the analogous result in Singularity theory). We will sketch a sheaf-theoretic proof, by following the Riemann-Hilbert correspondence all along the Fourier-Laplace transformation.

\num{}
The Laplace transform $\Fou M$ of $M$ can be obtained by an ``integral formula'' $\Fou M=\wh p_+(p^+M\otimes E^{-t\hb'})$, where $p,\wh p$ denote the projections $\Afu_t\times\nobreak\Afu_{\hb'}\to\Afu_t,\,\Afu_{\hb'}$, and $E^{-t\hb'}=(\CC[t,\hb'],d-d(t\hb'))$.

\num{}
The duality isomorphism $\Fou(DM)\isom\iota^+D\,\Fou M$ mentioned in the proof of Theorem \ref{th:FLTQ} can be obtained by applying standard isomorphisms of commutation between the duality functor and the proper direct image functor one the one hand, the smooth inverse image functor on the other hand, in the realm of $\cD$-module theory (\cf \eg \cite[VII.9.6 \& VII.9.13]{Borel87b} or \cite[Th\ptbl4.33 \& Th\ptbl4.12]{Kashiwara03}). In order to do so, it is convenient to extend the previous setting to $\PP^1_t\times\PP^1_{\hb'}$ in order to work with a proper map $\wh p$. In such a way, the duality isomorphism at the level of $\cD$-modules and the one at the level of Stokes-filtered local systems are constructed in parallel ways.

\num{}
The correspondence between the duality isomorphism for proper maps, in both settings, is proved in \cite{MSaito89b}. A similar statement for pull-back by a smooth morphism can be proved similarly.

\num{}
This reduces the problem to a comparison between both isomorphisms on $\PP^1_t\times\nobreak\PP^1_{\hb'}$. For our purpose, one can show that it is enough to compare both isomorphisms over $\PP^1_t\times\CC^*_{\hb'}$. On the one hand, we have $D(p^+M\otimes E^{-t\hb'})\simeq p^+(DM)\otimes E^{t\hb'}$ (the change of sign explaining the need of $\iota$). On the other hand, let $Z=\PP^1_t\times\CC^*_{\hb'}$ and $\varpi: \wt Z\to Z$ be the oriented real blowing-up along $\{\infty\}\times\CC^*_{\hb'}$. If $\cF$ is the perverse sheaf associated to $M$ on $\Afu_t$ via the de~Rham functor, the perverse sheaf on $Z$ associated to $p^+M\otimes E^{-t\hb'}$ via the de~Rham functor can be written as $\bR\varpi_*\beta_!\bR\alpha_*(p\circ j)^{-1}\cF[1]$, where, setting
\[
L'_{\leq0}=\{(t,\hb')\mid t=0\text{ or }\arg t+\arg\hb'\in[\pi/2,3\pi/2]\bmod2\pi\}\subset\wt Z,
\]
$\alpha,\beta$ are the inclusions
\[
\Afu_t\times\CC^*_{\hb'}\Hto{\alpha}L'_{\leq0}\Hto{\beta}\wt Z.
\]
and $j=\varpi\circ\beta\circ\alpha$ is the inclusion $\Afu_t\times\CC^*_{\hb'}\hto Z$. The question is then reduced to a local duality theorem, comparing the duality isomorphism for $\cD$-modules and the Poincaré-Verdier duality for perverse sheaves.\qed

\backmatter
\newcommand{\SortNoop}[1]{}\def\cprime{$'$}
\providecommand{\bysame}{\leavevmode ---\ }
\providecommand{\og}{``}
\providecommand{\fg}{''}
\providecommand{\smfandname}{\&}
\providecommand{\smfedsname}{\'eds.}
\providecommand{\smfedname}{\'ed.}
\providecommand{\smfmastersthesisname}{M\'emoire}
\providecommand{\smfphdthesisname}{Th\`ese}

\end{document}